\title[Reflection principles for zero mean curvature surfaces in $\mathbb{I}^3$]{%
Reflection principles for zero mean curvature surfaces in the simply isotropic $3$-space
}
\author[S.~Akamine and H.~Fujino]{
Shintaro Akamine and Hiroki Fujino
}   
\address[Shintaro Akamine]{%
Department of Liberal Arts, College of Bioresource Sciences,
Nihon University, 
1866 Kameino, Fujisawa, Kanagawa, 252-0880, Japan}
\email{akamine.shintaro@nihon-u.ac.jp}
\address[Hiroki Fujino]{%
Institute for Advanced Research, Graduate School of Mathematics, 
Nagoya University, Furo-cho, Chikusa-ku, Nagoya 464-8602, Japan
}
\email{m12040w@math.nagoya-u.ac.jp}
\subjclass[2010]{%
 Primary  53A10;   
 Secondary 53B30; 31A05; 31A20}
\keywords{%
    reflection principle,
    zero mean curvature surface, 
    isotropic space, 
    harmonic function
}%
\thanks{
The first author was partially supported by 
JSPS KAKENHI Grant Number 19K14527,
and the second author by JSPS KAKENHI Grant Number 20K14306.
}
\theoremstyle{plain}
 \newtheorem{theorem}{Theorem}[section]
 \newtheorem{proposition}[theorem]{Proposition}
 \newtheorem{lemma}[theorem]{Lemma}
 \newtheorem{corollary}[theorem]{Corollary}
\theoremstyle{definition}
\theoremstyle{remark}
 \newtheorem{remark}[theorem]{Remark}
 \newtheorem*{remark*}{Remark}
\newtheorem{example}[theorem]{Example}
 \newtheorem*{acknowledgement}{Acknowledgement}
\numberwithin{equation}{section}
\newcommand{\vect}[1]{\boldsymbol{#1}}
\renewcommand{\phi}{\varphi}
\definecolor{Blue}{rgb}{0,0,1}  
\definecolor{Red}{rgb}{1,0,0}  
\begin{document}
\maketitle

\begin{abstract}
Zero mean curvature surfaces in the simply isotropic 3-space $\mathbb{I}^3$ naturally appear as  intermediate geometry between geometry of minimal surfaces in $\mathbb{E}^3$ and that of maximal surfaces in $\mathbb{L}^3$.
In this paper, we investigate reflection principles for zero mean curvature surfaces in $\mathbb{I}^3$ as with the above surfaces in $\mathbb{E}^3$ and $\mathbb{L}^3$. In particular, we show a reflection principle for isotropic line segments on such zero mean curvature surfaces in $\mathbb{I}^3$, along which the induced metrics become singular.
\end{abstract}


\section{Introduction} \label{sec:1} 
Recently, there has been a growing interest for surfaces in the isotropic $3$-space $\mathbb{I}^3$, which is the 3-dimensional vector space $\mathbb{R}^3$ with the degenerate metric $dx^2+dy^2$. Here, $(x,y,t)$ are canonical coordinates on $\mathbb{I}^3$. In particular, a class of surfaces in $\mathbb{I}^3$ naturally appears as an intermediate geometry between geometry of minimal surfaces in the Euclidean $3$-space $\mathbb{E}^3$ and that of maximal surfaces in the Loretnz-Minkowski $3$-space $\mathbb{L}^3$. In fact, if we consider the deformation family introduced in \cite{AF3} with parameter $c\in \mathbb{R}$
\begin{equation}\label{eq:Wformula}
	X_{c}(w)=\mathrm{Re}\int^w \left(1-c G^2,-i(1+c G^2),2G\right)Fd\zeta,\quad w\in D, 
\end{equation} 
on a simply connected domain $D\subset \mathbb{C}$. Here, the pair $(F,G)$ of a holomorphic function $F$ and a meromorphic function $G$ on $D$ is called a {\it Weierstrass data} of $X_c$. Interestingly, \eqref{eq:Wformula} represents Weierstrass-type  formulae for minimal surfaces in $\mathbb{E}^3$ when $c=1$ and for maximal surfaces in $\mathbb{L}^3$ when $c=-1$. When we take $c=0$, \eqref{eq:Wformula} is nothing but the representation formula for zero mean curvature surfaces in $\mathbb{I}^3$, see \cite{AF3,Si,MaEtal,SY,Sato,Pember} for example and Figure \ref{Fig:Cats}. 

\begin{figure}[htbp]
 \vspace{3.0cm}
\hspace*{-11ex}
    \begin{tabular}{cc}
      \begin{minipage}[t]{0.55\hsize}
        \centering
                \vspace{-3.2cm}
        \includegraphics[keepaspectratio, scale=0.5]{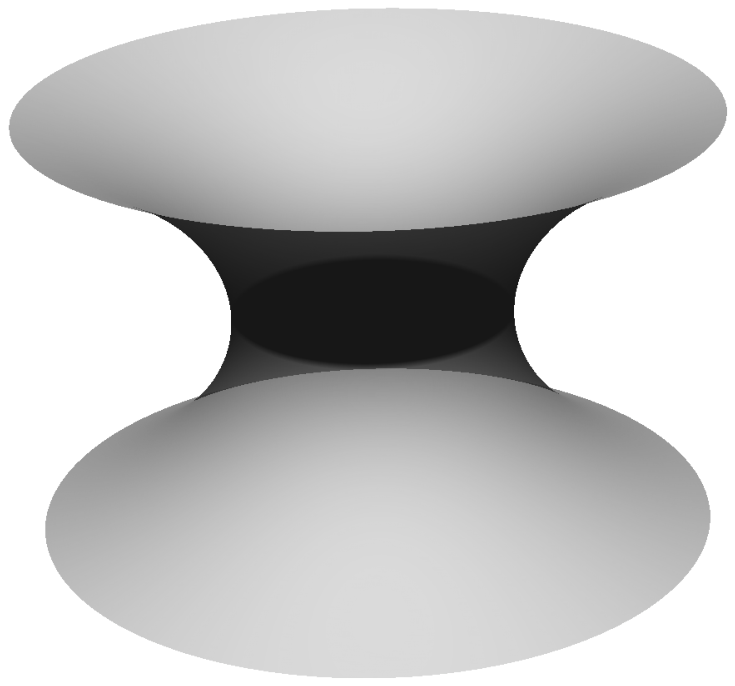}
              \end{minipage} 
                 \hspace*{-18ex}
        \begin{minipage}[t]{0.55\hsize}
        \centering
            \vspace{-3.3cm}
        \includegraphics[keepaspectratio, scale=0.5]{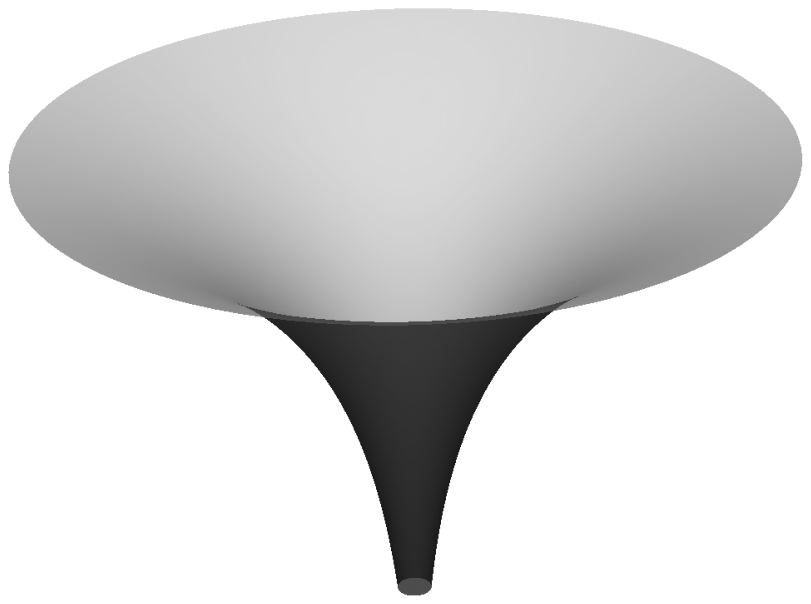}
              \end{minipage} 
           \hspace*{-15ex}
      \begin{minipage}[t]{0.50\hsize}
        \centering
        \vspace{-3.1cm}
        \includegraphics[keepaspectratio, scale=0.55]{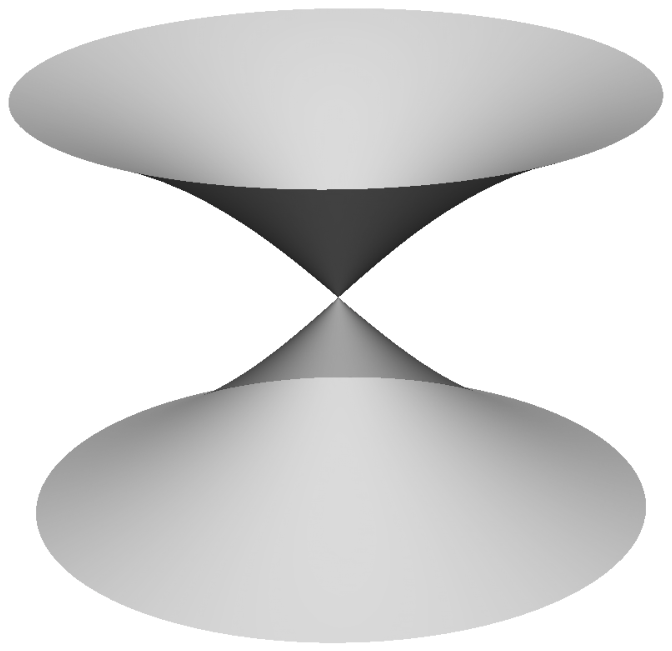}
            \end{minipage} 
    \end{tabular}
      \caption{The catenoid $X_1$ in $\mathbb{E}^3$ (left), isotropic catenoid $X_0$ in $\mathbb{I}^3$ (center) and elliptic catenoid $X_{-1}$ in $\mathbb{L}^3$ (right). These surfaces are related by one deformation family $\{X_c\}_{c\in \mathbb{R}}$.}
       \label{Fig:Cats}
  \end{figure}

One thing that is distinctive about surfaces in $\mathbb{I}^3$ is 
concerning vertical lines. Since the metric in $\mathbb{I}^3$ ignores the vertical component of vectors, the induced metric on a surface degenerates on vertical lines in $\mathbb{I}^3$. In this sense, vertical lines in $\mathbb{I}^3$ are intriguing and important objects in geometry in $\mathbb{I}^3$, and such a vertical line in $\mathbb{I}^3$ is called an {\it isotropic line}.

In this paper, we solve the problem raised in the paper by Seo-Yang \cite[Remark 30]{SY}, which we can state the following:
\vspace{0.3cm}
\begin{center}
{\it Is there a principle of analytic continuation across isotropic lines\\ on zero mean curvature surfaces in $\mathbb{I}^3$?}
\end{center}
\vspace{0.3cm}
Obviously, it is directly related to a reflection principle for zero mean curvature surfaces in $\mathbb{I}^3$.
  
 The main theorem of this paper is as follows (see also Figure \ref{Fig:Assumption}).  
 
\begin{theorem}\label{thm:reflection_Intro}
Let $S\subset \mathbb{I}^3$ be a bounded zero mean curvature graph over a simply connected Jordan domain $\Omega \subset \mathbb{C}\simeq \text{${xy}$-plane}$. If $\partial{S}$ has an isotropic line segment $L$ on a boundary point $z_0\in \partial{\Omega}$ satisfying
\begin{itemize}
\item  $L$ connects two horizontal curves $\gamma_1$ and $\gamma_2$ on $\partial{S}$, and 
\item the projections of $\gamma_1$ and $\gamma_2$ into the $xy$-plane form a regular analytic curve near $z_0$. We denote this analytic curve on the $xy$-plane by $\Gamma$.
\end{itemize}
 Then $S$ can be extended real analytically across $L$ via the analytic continuation across $\Gamma$ in the $xy$-plane and the reflection with respect to the height of the midpoint of $L$ in the $t$-direction.
\end{theorem}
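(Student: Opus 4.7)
Since a graph $t = f(x,y)$ has zero mean curvature in $\mathbb{I}^3$ if and only if $f$ is harmonic, $S$ corresponds to a bounded harmonic function $f$ on $\Omega$. Let $h_1$ and $h_2$ be the heights at which $\gamma_1$ and $\gamma_2$ meet $L$; then $f$ takes the boundary value $h_j$ on the arc of $\Gamma$ near $z_0$ that is the projection of $\gamma_j$. After composing with a conformal map that straightens $\Gamma$, I may assume $\Omega = D^+ := \{z : |z| < 1,\ \Im(z) > 0\}$, $\Gamma = (-1,1)$, $z_0 = 0$, with $f \to h_1$ on $(-1,0)$ and $f \to h_2$ on $(0,1)$; harmonicity, boundedness, and the real analyticity of the boundary curve are preserved under conformal change of the $xy$-parameter.

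Next, introduce the explicit singular model $\phi(z) := h_2 + \frac{h_1-h_2}{\pi}\Arg(z)$ (principal branch on $D^+$), which is harmonic on $D^+$ and has the same boundary values as $f$ on $(-1,1) \setminus \{0\}$. The remainder $g := f - \phi$ is bounded and harmonic on $D^+$ and vanishes on $(-1,1) \setminus \{0\}$; by the classical Schwarz reflection principle together with removability of bounded harmonic singularities at a boundary point, $g$ extends to a harmonic (hence real analytic) function on the full disk $D$ satisfying $g(\bar z) = -g(z)$, and in particular $g(0) = 0$. The candidate extension across $\Gamma$ is then defined on $D^- := \{z : |z| < 1,\ \Im(z) < 0\}$ by $\tilde f(\bar z) := (h_1 + h_2) - f(z)$; this is precisely the combination described in the theorem of the $\Gamma$-reflection in the $xy$-plane and the reflection $t \mapsto 2m - t$ at the midpoint $m = (h_1 + h_2)/2$.

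To verify that $S \cup L \cup \tilde S$ is real analytic across $L$, I use the polar-type parametrization
\[
\Psi(\rho,\alpha) := \bigl(\rho\cos\alpha,\ \rho\sin\alpha,\ h_2 + \tfrac{h_1-h_2}{\pi}\alpha + g(\rho\cos\alpha, \rho\sin\alpha)\bigr)
\]
on $(\rho,\alpha) \in (-\varepsilon,\varepsilon) \times (0,\pi)$. For $\rho > 0$ this recovers $S$; using the identity $g(\bar w) = -g(w)$, a short computation shows that for $\rho < 0$ it recovers $\tilde S$. Since $g$ is real analytic on all of $D$, the map $\Psi$ is real analytic; it parametrizes $L$ at $\rho = 0$ by $\alpha \mapsto (0, 0, h_2 + \tfrac{h_1-h_2}{\pi}\alpha)$, and its Jacobian there has full rank because $h_1 \neq h_2$ (the length of $L$ is positive). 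Thus $\Psi$ gives the desired real analytic immersion containing $L$ in its interior.

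The main obstacle is the genuine jump discontinuity of $f$ at $z_0$ (from $h_1$ to $h_2$), which prevents any direct application of Schwarz reflection to $f$; the role of $\phi$ is precisely to absorb this jump into an explicit logarithmic-type harmonic function whose symmetry can be tracked by hand. A related subtle point is that the extension $\tilde f$ is itself discontinuous as a graph across $\Gamma$, so $S \cup \tilde S$ is not smooth across $\Gamma$ in the $xy$-plane; the gluing happens across $L$ in $\mathbb{I}^3$ instead, and the polar parametrization $\Psi$ is what makes this geometric gluing manifest.
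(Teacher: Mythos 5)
Your proof is correct, and its analytic core --- blowing up the jump point in polar coordinates and absorbing the jump into the explicit harmonic model $\frac{h_1-h_2}{\pi}\Arg$ so that the remainder reflects oddly by the classical Schwarz principle --- is exactly the mechanism behind the paper's key ingredient, Proposition \ref{prop:DiscontiRef} (which the paper imports from \cite{AF2} rather than reproving; compare also the explicit decomposition \eqref{eq:t2} used for Corollary \ref{cor:conjugate}). The packaging, however, is genuinely different. The paper works with a conformal parametrization $X=(h,t)$ of $S$ on the upper half-plane, using the Riemann mapping theorem and Carath\'eodory's theorem to control $h$ on $\partial\mathbb{H}$, the Poisson integral representation of the bounded harmonic $t$, and Lemma \ref{lem:ref_in_analytic_arc} to reflect the planar part; you instead straighten $\Gamma$ locally in the target $xy$-plane and work directly with the graph function $f$, which makes the argument local and self-contained, avoids Carath\'eodory entirely, and lets you verify by hand (via the parametrization $\Psi$ and the rank computation at $\rho=0$, using $h_1\neq h_2$) that the glued surface is a real analytic immersion containing $L$ --- a point the paper leaves implicit. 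What you lose relative to the paper's global conformal setup is the extra information that comes for free there, namely the identification of $L$ with the cluster set $C(X,w_0)$ (part (ii) of Theorem \ref{thm:reflection}) and the behaviour of the conjugate surface in Corollary \ref{cor:conjugate}; but neither is needed for the statement at hand. Your closing observation that the extension is glued across $L$ in $\mathbb{I}^3$ rather than across $\Gamma$ as a graph is accurate and worth keeping.
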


  \begin{figure}[htbp]
 \vspace{3.3cm}
\hspace{-8ex}
    \begin{tabular}{cc}
      \begin{minipage}[t]{0.55\hsize}
        \centering
                \vspace{-3.5cm}
                \hspace*{-12ex}
        \includegraphics[keepaspectratio, scale=0.55]{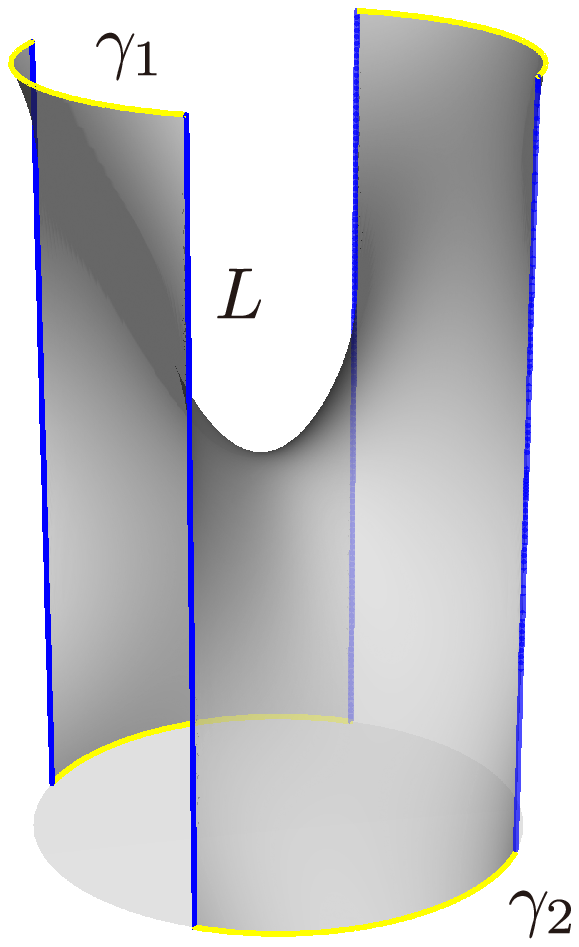}
              \end{minipage} 
                 \hspace{-7ex}
        \begin{minipage}[t]{0.55\hsize}
        \centering
            \vspace{-3.5cm}
        \includegraphics[keepaspectratio, scale=0.65]{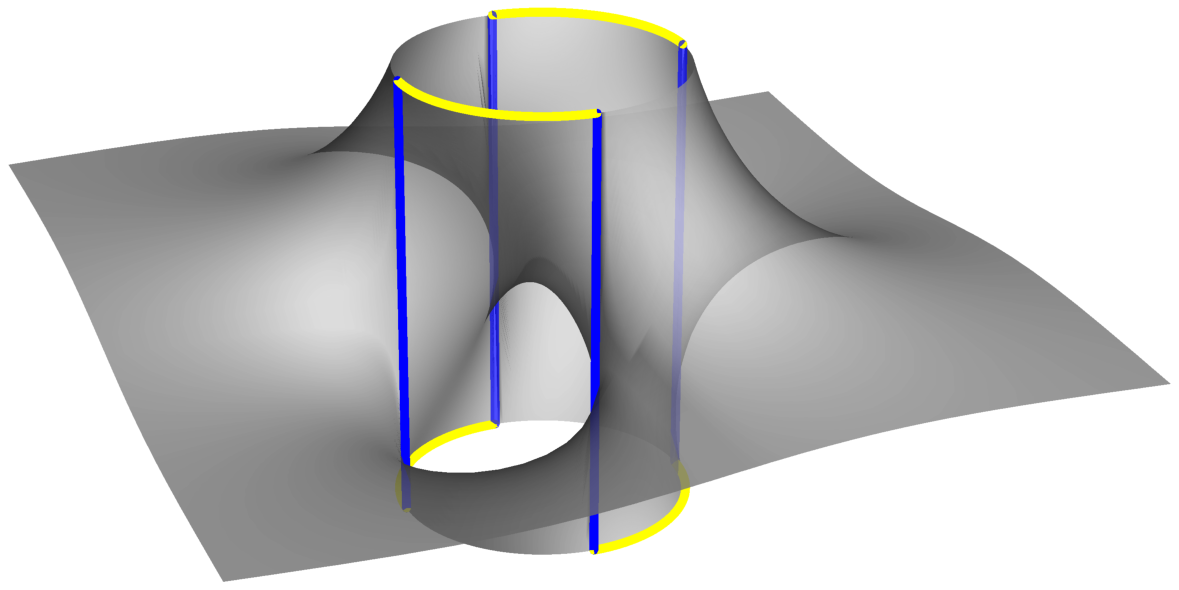}
              \end{minipage} 
    \end{tabular}
     \vspace{-0.5cm}
      \caption{A surface with the boundary condition assumed in Theorem \ref{thm:reflection_Intro} (left) and its analytic extension (right). Each vertical line (blue one) indicates $L$ and horizontal curves connected by $L$ (yellow ones) indicate $\gamma_1$ and $\gamma_2$ in Theorem \ref{thm:reflection_Intro}.}
       \label{Fig:Assumption}
  \end{figure}
Moreover, we also investigate how this kind of isotropic lines appear on boundary of zero mean curvature surfaces in $\mathbb{I}^3$, see Theorem \ref{thm:reflection} for more details.

As an application of Theorem \ref{thm:reflection_Intro}, we give an analytic continuation of some examples including the helicoid across isotropic lines. We also give triply periodic zero mean curvature surfaces with isotropic lines. One of them is analogous to the Schwarz' D-minimal surface in $\mathbb{E}^3$.

The organization of this paper is as follows. In Section 2, we give a short summary of zero mean curvature surfaces in  $\mathbb{I}^3$. In Section 3, we first prove a reflection principle for continuous boundaries of zero mean curvature surfaces in $\mathbb{I}^3$ as the classical reflection principle for minimal surfaces in $\mathbb{E}^3$. After that we give a proof of Theorem \ref{thm:reflection_Intro}. 
Finally, we give some examples of zero mean curvature surfaces with isotropic lines in Section 4.


\section{Preliminary} \label{sec:2} 

In this section, we recall some basic notions of zero mean curvature surfaces in  $\mathbb{I}^3$. See \cite{Sachs,Sato, Strubecker, Strubecker2, SY, Si} and their references for more details.

The {\it simply isotropic $3$-space} $\mathbb{I}^3$ is the 3-dimensional vector space $\mathbb{R}^3$ with the degenerate metric $\langle \ ,\ \rangle := dx^2+dy^2$. 
Here, $(x,y,t)$ are canonical coordinates on $\mathbb{I}^3$. 
A non-degenerate surface in $\mathbb{I}^3$ is an immersion $X\colon D \to \mathbb{I}^3$ from a domain $D\subset \mathbb{C}$ into $\mathbb{I}^3$ whose induced metric $ds^2:=X^*\langle\ ,\ \rangle$ is positive-definite. Then, we can define the Laplacian $\Delta$ on the surface with respect to $ds^2$. 

Since minimal surfaces in the Euclidean $3$-space $\mathbb{E}^3$ and maximal surfaces in the Minkowski $3$-space $\mathbb{L}^3$ are characterized by the equation 
\begin{equation}\label{eq:harmonic}
\Delta{X}= \left( \Delta{x}, \Delta{y}, \Delta{t} \right) \equiv \vect{0},
\end{equation} 
we can also consider surfaces in $\mathbb{I}^3$ satisfying the equation \eqref{eq:harmonic}. Such a surface is called a {\it zero mean curvature surface} or {\it isotropic minimal surface} in $\mathbb{I}^3$. 

Since $ds^2$ is positive-definite, each tangent plane of the surface $X$ is not vertical. Hence, each surface $X$ is locally parameterized by $X(x,y)=(x,y,f(x,y))$ for a function $f=f(x,y)$ and the Laplacian $\Delta$ can be written as $\Delta = \partial_x^2+\partial_y^2$ on this coordinates. This means that a zero mean curvature surface in $\mathbb{I}^3$ is locally the graph of a harmonic function on the $xy$-plane.

As well as minimal surfaces in $\mathbb{E}^3$ and maximal surfaces in $\mathbb{L}^3$, we can see by the equation \eqref{eq:harmonic} that zero mean curvature surfaces in $\mathbb{I}^3$ also have the following Weierstrass-type representation formula.

\begin{proposition}[cf.~\cite{MaEtal,Pember,Sato,Strubecker,SY}]\label{prop:W}
Let $F\not \equiv 0$ be a holomorphic function on a simply connected domain $D\subset \mathbb{C}$ and $G$ a meromorphic function on $D$ such that $FG$ is holomorphic on $D$. Then the mapping 
\begin{equation}\label{eq:w-formula}
X(w)=\mathrm{Re}\int^w {}^t(1,-i,2G)Fd\zeta
\end{equation}
gives a zero mean curvature surface in $\mathbb{I}^3$. Conversely, any zero mean curvature surface in $\mathbb{I}^3$ is of the form \eqref{eq:w-formula}. 
\end{proposition}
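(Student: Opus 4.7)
For the direct implication, the plan is to verify that the right-hand side of \eqref{eq:w-formula} yields a harmonic map whose induced isotropic metric is positive definite. Since $F$ and $FG$ are holomorphic on the simply connected $D$, the vector-valued integrand ${}^{t}(F,-iF,2FG)$ is holomorphic, so $\Phi(w):=\int^{w}{}^{t}(F,-iF,2FG)\,d\zeta$ is a well-defined holomorphic map $D\to\mathbb{C}^{3}$. Its real part $X$ is componentwise harmonic, and a direct calculation yields $x+iy=\int F\,d\zeta$. Hence the pullback of the degenerate ambient metric is the conformal one $X^{*}(dx^{2}+dy^{2})=|F|^{2}|dw|^{2}$, which is positive definite on $\{F\neq 0\}$; on this open set $w$ is isothermal, the induced Laplacian equals $|F|^{-2}\Delta_{w}$, and so $\Delta X=0$ follows at once from the harmonicity of $X$ with respect to $\Delta_{w}$.

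For the converse, the plan is to invert the construction via isothermal coordinates. Starting from a zero mean curvature surface $X\colon D\to\mathbb{I}^{3}$, I would first pass to a global conformal parameter $w=u+iv$ of $D$, whose existence I would deduce from the Korn--Lichtenstein theorem for local existence together with the simple connectivity of $D$. In such coordinates \eqref{eq:harmonic} becomes $\partial_{w}\partial_{\bar{w}}X=0$, so $\phi:=2X_{w}$ is a holomorphic $\mathbb{C}^{3}$-valued map. The isothermal condition for the degenerate ambient metric $dx^{2}+dy^{2}$ collapses to the single relation $\phi_{1}^{2}+\phi_{2}^{2}=0$, which factors as $(\phi_{1}+i\phi_{2})(\phi_{1}-i\phi_{2})=0$; by the identity theorem one factor vanishes identically on $D$, and after possibly replacing $w$ with $\bar{w}$ we may assume $\phi_{2}=-i\phi_{1}$. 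Setting $F:=\phi_{1}$ and $G:=\phi_{3}/(2F)$ then produces a meromorphic $G$ with $FG=\phi_{3}/2$ holomorphic, and $X=\mathrm{Re}\int\phi\,d\zeta$ takes the form \eqref{eq:w-formula}.

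The main obstacle is the converse, specifically the passage to global isothermal coordinates on $D$; I would handle this by invoking Korn--Lichtenstein for local existence and then using simple connectivity to patch the charts, in direct analogy with the classical proof of the Weierstrass representation for minimal surfaces in $\mathbb{E}^{3}$. A secondary, more cosmetic subtlety is that zeros of $F$ correspond to branch points of the immersion, where $w$ fails to be isothermal, so strictly speaking the representation should be read on the open set $\{F\neq 0\}$; elsewhere both directions go through without modification.
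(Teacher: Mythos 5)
Your argument is correct, but note that the paper does not actually prove Proposition~\ref{prop:W}: it is stated as a known result with references (Strubecker, Seo--Yang, et al.), the only supporting discussion being the observation in Section~\ref{sec:2} that a non-degenerate surface in $\mathbb{I}^3$ is locally a graph $(x,y,f(x,y))$ on which $\Delta=\partial_x^2+\partial_y^2$, so that zero mean curvature surfaces are locally graphs of harmonic functions. That observation is worth comparing with your converse, because it makes your ``main obstacle'' disappear: since the ambient metric is $dx^2+dy^2$, the projection to the $xy$-plane is an isometry for the induced metric, so $w=x+iy$ (more precisely, the holomorphic function $h=x+iy$ of your own direct implication) is \emph{already} a conformal parameter wherever the surface is non-degenerate. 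There is no need to invoke Korn--Lichtenstein and patch charts; in a conformal parametrization one gets $\phi=2X_w$ holomorphic with $\phi_1^2+\phi_2^2=0$ exactly as you say, and the factorization plus the identity theorem finishes the argument (you should add one line checking $F=\phi_1\not\equiv 0$, which follows since otherwise $\phi_2\equiv 0$ and the induced metric would vanish identically, contradicting non-degeneracy). Your closing caveat about zeros of $F$ is consistent with the paper's own Remark following the proposition, which treats such zeros as singular points of the induced metric rather than as obstructions to the formula. In short: the proof is sound and standard, but the special degenerate structure of $\mathbb{I}^3$ lets you replace the uniformization/isothermal-coordinates step with the elementary graph observation the paper already records.
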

The pair $(F,G)$ is called a {\it Weierstrass data} of $X$.

\begin{remark}
Since the induced metric is $ds^2=\lvert F \rvert^2dwd\overline{w}$, we can consider zero mean curvature surfaces in $\mathbb{I}^3$ with singular points by the equation \eqref{eq:w-formula}. Singular points, on which the metric $ds^2$ degenerates, correspond to isolated zeros of the holomorphic function $F$. 
\end{remark}
At the end of this section, we mention another representation by using harmonic functions. 
Let us define the holomorphic function $h(w)=\int^w Fd\zeta$ on $D$. Then the conformal parametrization of $X(w)=(x(w),y(w),t(w))$ in \eqref{eq:w-formula} is written as $X(w)=(h(w),t(w))$, here we identify the $xy$-plane with the complex plane $\mathbb{C}$.

\section{Main theorem} \label{sec:3} 
\subsection{Reflection principle for horizontal curves}

As in the classical minimal surface theory in $\mathbb{E}^3$, the Schwarz reflection principle also leads to a symmetry principle for continuous planar boundary of zero mean curvature surfaces $\mathbb{I}^3$. In this subsection, we recall a reflection principle of this typical type.

A subset $\Gamma \subset \mathbb{C}$ is said to be a {\it regular simple analytic arc} if there exists an open interval $I\subset \mathbb{R}$ and an injective real analytic curve $\gamma\colon I \to \mathbb{C}$ such that $\gamma' \neq 0$ and $\gamma(I)=\Gamma$. 
We denote the analytic continuation of $\gamma$ into a neighborhood of $I$ by $\gamma$ again, and note that $\gamma$ is a conformal mapping around $I$ since $\gamma' \neq 0$. 
 We define the reflection map $R_{\Gamma}$ with respect to $\Gamma$ by the relation $R_\Gamma= R_\gamma:=\gamma \circ R \circ \gamma^{-1}$, where $R$ is the complex conjugation (note that we can easily see that $R_\Gamma=R_\gamma$ is independent of the choice of $\gamma$). We call this reflection $R_\Gamma$ the {\it reflection with respect to $\Gamma$}.

 Let $f\colon \mathbb{H} \to \mathbb{C}$ be a holomorphic function which extends continuously to an interval $J=(a,b)\subset \partial \mathbb{H}$. Here, $\mathbb{H}$ denotes the upper half-plane in $\mathbb{C}$. In this setting, let us recall the following reflection principle for holomorphic functions (the proof is given in the similar way to the discussion in \cite[Chapter 6, Section 1.4]{Ahl1}).

\begin{lemma} \label{lem:ref_in_analytic_arc}
Under the above assumption,  if the image $\Gamma :=f(J)$ is a regular simple analytic arc, then $f$ extends holomorphically to an open subset containing $\mathbb{H}\cup J$ so that
\[
f(\overline{w})=R_{\Gamma} \circ  f(w).
\]
\end{lemma}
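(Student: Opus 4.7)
The plan is to straighten $\Gamma$ locally using the conformal parametrization $\gamma$, so that the statement reduces to the classical Schwarz reflection principle across a real interval.

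First, I would work locally at an arbitrary point $w_0\in J$. Let $p_0:=f(w_0)\in \Gamma$ and let $t_0\in I$ be the parameter with $\gamma(t_0)=p_0$. Since $\gamma'(t_0)\neq 0$, there is a neighborhood $U$ of $t_0$ in $\mathbb{C}$ on which $\gamma$ is biholomorphic onto $V:=\gamma(U)$, a neighborhood of $p_0$, and the induced inverse $\gamma^{-1}\colon V\to U$ is holomorphic. By continuity of $f$ on $\mathbb{H}\cup J$ at $w_0$, there is an open neighborhood $W\subset\mathbb{C}$ of $w_0$ such that $f(W\cap\overline{\mathbb{H}})\subset V$.

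Next, I would define $g:=\gamma^{-1}\circ f\colon W\cap\mathbb{H}\to \mathbb{C}$. This is holomorphic, extends continuously to $W\cap J$, and on $W\cap J$ takes values in $I\subset\mathbb{R}$ because $f(J)\subset\Gamma$ and $\gamma^{-1}(\Gamma\cap V)\subset I$. Hence the classical Schwarz reflection principle (cf.\ \cite[Chapter 6, Section 1.4]{Ahl1}) applies to $g$: it extends to a holomorphic function on $W\cup R(W)$ satisfying $g(\overline{w})=\overline{g(w)}$.

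Finally, I would push this back via $\gamma$. On the reflected side $R(W)\cap (-\mathbb{H})$, the extension $\widetilde{f}(w):=\gamma(g(w))$ is well-defined and holomorphic, and agrees with $f$ on $W\cap\mathbb{H}$ because $\gamma\circ g=\gamma\circ\gamma^{-1}\circ f = f$. The identity
\[
\widetilde{f}(\overline{w}) = \gamma(g(\overline{w})) = \gamma(\overline{g(w)}) = \gamma\circ R\circ\gamma^{-1}\circ f(w) = R_{\Gamma}(f(w))
\]
is then immediate from the definition of $R_\Gamma$. Since $w_0\in J$ was arbitrary, these local extensions patch together (they are uniquely determined on the $\mathbb{H}$-side) to give a holomorphic extension on an open neighborhood of $\mathbb{H}\cup J$ satisfying the desired symmetry.

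The only real obstacle is the technical one in step two: ensuring that $f$ sends a neighborhood of $w_0$ into the domain on which $\gamma^{-1}$ is defined, so that the composition $\gamma^{-1}\circ f$ actually makes sense. This is settled purely by the continuity of $f$ up to $J$ combined with the openness of $V$, after possibly shrinking $W$. Once this composition is legitimate, the rest is a direct application of the classical reflection principle and the independence of $R_\Gamma$ from the choice of parametrization $\gamma$.
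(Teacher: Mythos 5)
Your argument is correct and is essentially the paper's own: the paper gives no written proof of this lemma, only the remark that it follows as in \cite[Chapter~6, Section~1.4]{Ahl1}, and that reference's argument is exactly your local straightening of $\Gamma$ by the conformal extension of $\gamma$, the classical Schwarz reflection applied to $g=\gamma^{-1}\circ f$ (which is real on $W\cap J$), and transport back by $\gamma$. The one step you assert rather than prove, namely $\gamma^{-1}(\Gamma\cap V)\subset I$ after shrinking $V$ (so that $g$ really is real-valued on $W\cap J$ and not merely $U$-valued), is the same implicit normalization made in the cited reference, so your proposal matches the intended proof in both structure and level of detail.
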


As a typical case, the following reflection principle for zero mean curvature surfaces in $\mathbb{I}^3$ holds.
\begin{proposition}\label{prop:reflection}
Let $X\colon \mathbb{H}\to S$ be a conformal parametrization of a zero mean curvature surface $S\subset \mathbb{I}^3$. If $X$ is continuous on an open interval $I\subset \partial{\mathbb{H}}$ and $\Gamma:=X(I)$ is a regular (simple) analytic arc 
on a horizontal plane $P\simeq \mathbb{C}$, then $S$ can be extended real analytically across $\Gamma$ via the reflection with respect to $\Gamma$ in $xy$-direction and the planar symmetry with respect to $P$.
\end{proposition}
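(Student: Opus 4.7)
The plan is to work in the harmonic-function representation $X(w) = (h(w), t(w))$ introduced at the end of Section 2, where $h\colon \mathbb{H} \to \mathbb{C}$ is holomorphic and $t\colon \mathbb{H}\to\mathbb{R}$ is harmonic, and to reflect the two components separately. The hypothesis that $\Gamma = X(I)$ lies on a horizontal plane $P$ of some height $t_0$, together with the continuity of $X$ on $I$, gives continuous boundary values $h|_I\colon I \to \Gamma$ and $t|_I \equiv t_0$.

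For the horizontal component, Lemma \ref{lem:ref_in_analytic_arc} applies directly to $h$: since $h$ extends continuously to $I$ with image the regular simple analytic arc $\Gamma$, it admits a holomorphic extension across $I$ satisfying $h(\bar{w}) = R_\Gamma(h(w))$. For the vertical component, since $t$ is harmonic on $\mathbb{H}$ with constant boundary value $t_0$ on $I$, the classical Schwarz reflection principle for harmonic functions provides a harmonic extension across $I$ satisfying $t(\bar{w}) = 2t_0 - t(w)$.

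Combining these two extensions yields
\[
X(\bar{w}) = \bigl( R_\Gamma(h(w)),\, 2t_0 - t(w) \bigr),
\]
which is precisely the composition of the reflection with respect to $\Gamma$ in the $xy$-direction and the planar symmetry with respect to $P$. Since the extended $h$ and $t$ are real analytic, so is the extended $X$, and passing to the image surface one obtains the claimed analytic extension of $S$ across $\Gamma$. I expect no serious obstacle: the degenerate metric of $\mathbb{I}^3$ decouples the reflection into a holomorphic piece (handled by Lemma \ref{lem:ref_in_analytic_arc}) and a harmonic piece (handled by the standard Schwarz principle) along the common boundary arc $I$, and the geometric interpretation falls out of the algebra.
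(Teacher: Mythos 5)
Your proposal is correct and follows essentially the same route as the paper's proof: decompose $X=(h,t)$, apply Lemma \ref{lem:ref_in_analytic_arc} to the holomorphic part and the classical Schwarz reflection principle to the harmonic height function, then combine. The only cosmetic difference is that the paper first normalizes $P$ to the $xy$-plane so that $t_0=0$, whereas you carry the general height $t_0$ through the formulas.
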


We should mention that this result was essentially obtained by Strubecker \cite{Strubecker3}. Here, we give a short proof of this fact for the sake of completeness.

\begin{proof}
We may assume that $P$ is the $xy$-plane. By the assumption, $X=(x,y,t)$ satisfies
$t\equiv 0$ on $I$, and $f(I)=X(I)=\Gamma$ is a regular analytic arc. By the Schwarz reflection principle (see \cite[Chapter 4, Section 6.5]{Ahl1}), the harmonic function $t$ can be extended across $I\subset \partial{\mathbb{H}}$ so that $t(\overline{w})=-t(w)$. 
On the other hand, by Lemma \ref{lem:ref_in_analytic_arc}, $f$ is also extended across $I$.
Therefore, $X(w)=(f(w), t(w))$ can be defined across $I$ and satisfies $X(\overline{w})=(R_{\Gamma} \circ  f(w),-t(w))$, which is the desired symmetry.
\end{proof}

As a special case of Proposition \ref{prop:reflection}, we can consider the following specific boundary conditions.

\begin{corollary}\label{cor:reflection_nonvertical}
Under the same assumptions as in Proposition \ref{prop:reflection}, the following statements hold.
\begin{itemize}
\item[(i)] If $\Gamma$ is a straight line segment on a horizontal plane $P$, then $S$ can be extended via the $180^\circ$-degree rotation with respect to $\Gamma$.
\item[(ii)]  If $\Gamma$ is a circular arc on a horizontal plane $P$, then $S$ can be extended via the inversion of the circle in the $xy$-direction and the planar symmetry with respect to $P$.
\end{itemize}
\end{corollary}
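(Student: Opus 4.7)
The plan is to derive both statements directly from Proposition \ref{prop:reflection} by computing the map $R_\Gamma$ explicitly in each of the two geometrically special cases and interpreting the resulting composition with the vertical reflection $(w,t)\mapsto(w,-t)$ as a familiar rigid/M\"obius-type motion of $\mathbb{I}^3$. After a horizontal translation we may assume $P$ is the $xy$-plane, so Proposition \ref{prop:reflection} gives
\[
X(\overline{w})=\bigl(R_{\Gamma}(f(w)),\,-t(w)\bigr).
\]
Thus the whole task is to identify $R_\Gamma$ for the two distinguished analytic arcs and to compose it with the reflection $t\mapsto -t$ in the $t$-direction.

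For (i), I would parametrize the line segment by a complex-affine map $\gamma(s)=a+bs$ with $a\in\mathbb{C}$, $b\in\mathbb{C}\setminus\{0\}$, $s\in I\subset\mathbb{R}$. Then from the definition $R_\Gamma=\gamma\circ R\circ \gamma^{-1}$ a short calculation gives
\[
R_\Gamma(w)=a+\frac{b}{\overline{b}}\,\overline{(w-a)},
\]
which is the standard Euclidean reflection of the $xy$-plane across the line spanned by $\Gamma$. The map $(w,t)\mapsto(R_\Gamma(w),t)$ is then the reflection of $\mathbb{I}^3$ across the vertical plane through $\Gamma$; composing with $t\mapsto -t$ (reflection across $P$) yields the composition of reflections in two perpendicular planes meeting along $\Gamma$, i.e.\ the $180^{\circ}$ rotation about the line containing $\Gamma$. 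This gives (i).

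For (ii), I would parametrize the arc by $\gamma(s)=c+re^{is}$ on an interval of $s$, so that $\gamma^{-1}(w)=-i\log\bigl((w-c)/r\bigr)$ for a suitable branch of $\log$. Using $\overline{\log z}=\log\overline{z}$ on the branch one computes
\[
R_\Gamma(w)=\gamma\bigl(\overline{\gamma^{-1}(w)}\bigr)
=c+\frac{r^2}{\overline{w}-\overline{c}},
\]
which is precisely the inversion of the $xy$-plane in the circle $|w-c|=r$ containing $\Gamma$. Combining with the vertical reflection $t\mapsto -t$, which is exactly the planar symmetry with respect to $P$, gives the symmetry claimed in (ii).

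The main (and essentially only) technical point to verify is that the branch of $\log$ used in the second calculation is compatible with the analytic continuation $\gamma$ already employed in defining $R_\Gamma$, so that $R_\Gamma$ agrees with the inversion on a neighborhood of $\Gamma$ and not merely formally; once that is checked the corollary is just an unwinding of Proposition \ref{prop:reflection}.
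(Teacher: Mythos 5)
Your proposal is correct and takes exactly the route the paper intends: the paper states this corollary without proof as an immediate specialization of Proposition \ref{prop:reflection}, and your explicit computations of $R_\Gamma$ for an affine arc (giving the Euclidean reflection across the line, hence the $180^\circ$ rotation after composing with $t\mapsto -t$) and for a circular arc (giving the inversion $w\mapsto c+r^2/(\overline{w}-\overline{c})$) are precisely the unwinding the authors leave to the reader. The branch point you flag in case (ii) is harmless, since $R_\Gamma$ only needs to be defined on a neighborhood of the arc and the paper has already noted that $R_\Gamma$ is independent of the choice of local analytic parametrization $\gamma$.
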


\begin{remark}[Reflection for boundary curves on non-vertical planes]
Proposition \ref{prop:reflection} and Corollary \ref{cor:reflection_nonvertical} are also valid when $P$ is a general non-vertical plane as follows:
If $P$ is a non-vertical plane, we can write it by the equation $t=ax+by+c$ for some $a,b,c \in \mathbb{R}$. After taking the affine transformation 
\begin{equation}\label{eq:iso}
(x,y,t)\longmapsto (x,y,t-ax-by-c)
\end{equation}
preserving the metric $\langle \ ,\ \rangle$, $P$ becomes a horizontal plane. Hence we can apply the reflection properties as in Proposition \ref{prop:reflection} and Corollary \ref{cor:reflection_nonvertical}. 
We remark that the affine transformation \eqref{eq:iso} is not an isometry in $\mathbb{E}^3$ and hence symmetry changes slightly after this transformation. For instance, the symmetry in (i) of Corollary \ref{cor:reflection_nonvertical} is no longer the $180^\circ$-degree rotation with respect to a straight line after the inverse transformation of \eqref{eq:iso}.  The transformation \eqref{eq:iso} is one of congruent motions in $\mathbb{I}^3$. See \cite{Pottmann, Sachs, Strubecker3} for example.
\end{remark}

\subsection{Reflection principle for vertical lines}\label{subsec:vline}
The classical Schwarz reflection principle is for harmonic functions which are at least continuous on their boundaries. On the other hand, as discussed in the proof of Theorem 2.3 and Remark 2.4 in {\cite{AF2}}, each harmonic function with a discontinuous jump point at the boundary has also a real analytic continuation across the boundary after taking an appropriate blow-up as follows.

Let $\Pi\colon D^+:=\mathbb{R}_{>0}\times (0, \pi) \to \mathbb{H}$ be a homeomorphism defined by $\Pi(r, \theta)=re^{i \theta}$. By definition, $\Pi$ is real analytic on the wider domain $D:=\mathbb{R}\times (0, \pi)$.

\begin{proposition}[\cite{AF2}]\label{prop:DiscontiRef}
Let $f\colon \mathbb{H} \to \mathbb{R}$ be a bounded harmonic function which is continuous on $\mathbb{H} \cup (-\varepsilon, 0) \cup (0,\varepsilon)$ for some $\varepsilon >0$. If $f\equiv a$ on $(-\varepsilon, 0)$ and  $f\equiv b$ on $ (0,\varepsilon)$, then the real analytic map $f\circ \Pi$ on $D^+$ extends to $D$ real analytically satisfying the following conditions.
\begin{itemize}
\item[(i)] $f\circ \Pi(-r,\pi-\theta)+X\circ\Pi(r,\theta)=a+b$, and 
\item[(ii)] $f\circ\Pi(0,\theta)=a\cfrac{\theta}{\pi}+b\left(1-\cfrac{\theta}{\pi}\right)$.
\end{itemize}
\end{proposition}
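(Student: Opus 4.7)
The plan is to subtract an explicit model harmonic function that absorbs the boundary jump, thereby reducing the statement to the classical Schwarz reflection principle for a harmonic function with continuous zero boundary values. Introduce the model
$$
\phi(re^{i\theta}) := a\frac{\theta}{\pi} + b\Bigl(1-\frac{\theta}{\pi}\Bigr),\qquad re^{i\theta}\in\mathbb{H},
$$
which is harmonic on $\mathbb{H}$, bounded between $\min(a,b)$ and $\max(a,b)$, and has boundary values $a$ on $(-\infty,0)$ and $b$ on $(0,\infty)$. Set $g:=f-\phi$. Then $g$ is a bounded harmonic function on $\mathbb{H}$, continuous on $\mathbb{H}\cup(-\varepsilon,0)\cup(0,\varepsilon)$, and identically zero on both sides of the origin.

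First I would harmonically extend $g$ across the full interval $(-\varepsilon,\varepsilon)$. Applying the classical Schwarz reflection principle separately on $(-\varepsilon,0)$ and $(0,\varepsilon)$ produces a harmonic extension on a punctured neighborhood of $0$ satisfying $g(\overline{w})=-g(w)$; since $g$ remains bounded, the removable-singularity theorem for bounded harmonic functions fills in the puncture and forces $g(0)=0$. The same reflection formula then extends $g$ globally to $\mathbb{C}\setminus\bigl((-\infty,-\varepsilon]\cup[\varepsilon,\infty)\bigr)$ as a harmonic function.

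Next I pull back through $\Pi$. For every $(r,\theta)\in D$ with $\theta\in(0,\pi)$, the point $re^{i\theta}$ lies in the upper half-plane (if $r>0$), equals $0$ (if $r=0$), or lies in the lower half-plane (if $r<0$), and in particular never meets the real axis outside $\{0\}$. Hence $g\circ\Pi$ is real analytic on all of $D$, and I define the extension by
$$
(f\circ\Pi)(r,\theta):=g(re^{i\theta}) + a\frac{\theta}{\pi} + b\Bigl(1-\frac{\theta}{\pi}\Bigr),
$$
which agrees with $f\circ\Pi$ on $D^+$ and is manifestly real analytic on $D$. Condition (ii) is immediate from $g(0)=0$. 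For (i), a direct computation shows $(-r)e^{i(\pi-\theta)}=\overline{re^{i\theta}}$, so the antisymmetry $g(\overline{w})=-g(w)$ makes the two $g$-terms cancel and the remaining $\phi$-terms sum to $a\frac{\pi-\theta}{\pi}+b\frac{\theta}{\pi}+a\frac{\theta}{\pi}+b\bigl(1-\frac{\theta}{\pi}\bigr)=a+b$.

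The main obstacle is producing a harmonic extension of $g$ \emph{through} the jump point $0$ rather than merely across the two side intervals. This is exactly where the boundedness hypothesis on $f$ (and hence on $g$) is essential, and it is handled by the removable-singularity theorem for bounded harmonic functions applied to the reflected $g$ on the punctured disk around $0$.
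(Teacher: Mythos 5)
Your proof is correct. The paper itself does not prove this proposition --- it imports it from \cite{AF2} --- but the decomposition it relies on is visible in the proof of Corollary \ref{cor:conjugate}, where $t$ is written as a combination of explicit $\arg$-terms plus a Poisson integral $P_W$ of boundary data vanishing on $(-\varepsilon,\varepsilon)$. Your model $\phi=b+\frac{a-b}{\pi}\arg w$ is exactly the term in that decomposition that carries the jump, so the key idea coincides; where you differ is in how the remainder is handled. The paper's route gets real-analytic extension of the remainder across the whole interval $(-\varepsilon,\varepsilon)$, including the origin, directly from the Poisson integral formula (the kernel is real analytic off the support of $W$ and odd in $\eta$, which also yields the symmetry in (i)). You instead apply Schwarz reflection on the two punctured side intervals and then invoke the removable singularity theorem for bounded harmonic functions to fill in the origin; this is where the boundedness hypothesis enters for you, and it makes the argument self-contained without any integral representation. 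Both completions are sound; your verification of (i) via $(-r)e^{i(\pi-\theta)}=\overline{re^{i\theta}}$ and the odd symmetry of $g$, and of (ii) via $g(0)=0$, is exactly what is needed. (The $X$ in item (i) of the statement is evidently a typo for $f$, and you have read it that way.)
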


\begin{remark}[Blow-up of discontinuous point $0\in \partial{\mathbb{H}}$]
The condition (ii) in Proposition \ref{prop:DiscontiRef} means that $f\circ\Pi(0,\theta)$ is the point which divides the line segment connecting $a$ and $b$ into two segments with lengths $(1-\theta/\pi)\colon \theta/\pi$.
\end{remark}

As pointed out in \cite[Remark 30]{SY}, vertical lines naturally appear on boundary of zero mean curvature surfaces in $\mathbb{I}^3$, along which each tangent vector $\vect{v}$ has zero length: $\langle \vect{v}, \vect{v} \rangle=0$. In this sense, a vertical line segment in $\mathbb{I}^3$ is different from any other non-vertical lines and it is called an {\it isotropic line} (cf.~\cite{Pottmann}). Obviously, we cannot apply the usual Schwarz reflection principle for such boundary lines because the conformal structure on such a surface in $\mathbb{I}^3$ breaks down on isotropic lines. By using Proposition \ref{prop:DiscontiRef}, we can investigate such isotropic lines and a reflection property along them as follows.

\begin{theorem}\label{thm:reflection}
Let $S\subset \mathbb{I}^3$ be a bounded zero mean curvature graph over a simply connected Jordan domain $\Omega \subset \mathbb{C}$. If $\partial{S}$ has a isotropic line segment $L$ on a boundary point $z_0\in \partial{\Omega}$ connecting two horizontal curves on $\partial{S}$ 
whose projections to the $xy$-plane form a regular (simple) analytic arc $\Gamma$ near $z_0$,
then the following properties hold.
\begin{itemize}
\item[(i)] $S$ can be extended real analytically across $L$ via the reflection with respect to $\Gamma$ in the $xy$-direction and  the reflection with respect to the height of the midpoint of $L$ in the $t$-direction.
 \item[(ii)] $L$ coincides with the cluster point set $C(X, w_0)$ of a conformal parametrization $X=(h,t)\colon \mathbb{H}\to S$ at $w_0$ in $\partial{\mathbb{H}}$ satisfying $h(w_0)=z_0$.
\end{itemize}
\end{theorem}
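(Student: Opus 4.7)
The plan is to uniformize $S$ by the upper half-plane and combine Lemma \ref{lem:ref_in_analytic_arc} for the holomorphic component of the parametrization with Proposition \ref{prop:DiscontiRef} for the harmonic component, both applied after the polar blow-up $\Pi$.

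First I would set up the conformal parametrization. Since $S$ is the graph of a harmonic function $f$ over $\Omega$ and the isotropic metric ignores the $t$-component, the induced metric on $S$ is $ds^2 = dx^2 + dy^2$, so $(x,y)$ is already isothermal and the conformal structure of $S$ coincides with that of $\Omega$. Composing with a Riemann map $h \colon \mathbb{H} \to \Omega$ and setting $t = f \circ h$ produces the conformal parametrization $X = (h,t) \colon \mathbb{H} \to S$ from the last paragraph of Section 2. Normalize so that $w_0 = 0$ and $h(0) = z_0$. Since $\Omega$ is a Jordan domain, $h$ extends continuously to $\overline{\mathbb{H}}$ by Carathéodory's theorem. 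Near $z_0$ the arc $\Gamma$ separates $\partial \Omega$ into two subarcs carrying $\gamma_1$ and $\gamma_2$ at constant heights $c_1$ and $c_2$; thus $h$ maps $(-\varepsilon, 0)$ and $(0, \varepsilon)$ into these two subarcs, while $t = f \circ h$ is continuous on $(-\varepsilon, 0) \cup (0, \varepsilon)$ with constant values $c_1$ and $c_2$, having a jump at $0$. Because $\Gamma$ is a regular simple analytic arc through $z_0$, Lemma \ref{lem:ref_in_analytic_arc} extends $h$ holomorphically across an open interval containing $0$, with $h(\overline{w}) = R_\Gamma(h(w))$.

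Next I would apply the blow-up. The function $t$ is bounded (since $S$ is bounded) and satisfies the hypotheses of Proposition \ref{prop:DiscontiRef} with $a = c_1$ and $b = c_2$, so $t \circ \Pi$ extends real analytically from $D^+$ to the strip $D = \mathbb{R} \times (0,\pi)$, obeying the sum relation (i) and the interpolation formula (ii) of that proposition. Meanwhile $h \circ \Pi$ is automatically real analytic on the full strip $D$ because $h$ was already shown to be holomorphic on a neighborhood of $0 \in \mathbb{C}$. Hence
\[
    X \circ \Pi = (h \circ \Pi, \; t \circ \Pi)
\]
is real analytic on $D$, giving the desired real analytic continuation of $X$ across the preimage of $L$.

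To verify assertion (ii), restrict to $\{0\} \times (0,\pi)$: then $h \circ \Pi(0,\theta) = h(0) = z_0$ is constant, while $t \circ \Pi(0,\theta) = c_1 \theta/\pi + c_2(1 - \theta/\pi)$ traces the segment from $(z_0, c_2)$ to $(z_0, c_1)$, exactly $L$. Since $X \circ \Pi$ is continuous on the closure of $D^+$, and since $w \to w_0 = 0$ in $\mathbb{H}$ corresponds to $r \to 0^+$ with $\theta$ free in $[0,\pi]$, the cluster set $C(X, w_0)$ equals precisely this image, i.e.\ $L$. For assertion (i), the identity $\Pi(-r, \pi - \theta) = \overline{\Pi(r,\theta)}$ (direct from $-re^{i(\pi-\theta)} = re^{-i\theta}$) together with Lemma \ref{lem:ref_in_analytic_arc} gives $h \circ \Pi(-r,\pi-\theta) = R_\Gamma(h \circ \Pi(r,\theta))$, while (i) of Proposition \ref{prop:DiscontiRef} gives $t \circ \Pi(-r, \pi-\theta) = (c_1 + c_2) - t \circ \Pi(r,\theta)$. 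Combined, these are exactly the claimed symmetry: reflection with respect to $\Gamma$ in the $xy$-direction and reflection about the midpoint height $(c_1 + c_2)/2$ in the $t$-direction.

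The main obstacle is conceptual rather than technical: one has to notice that the holomorphic piece $h$ and the harmonic piece $t$ have genuinely different boundary behaviours at $z_0$ (smooth analytic continuation across $\Gamma$ versus a jump discontinuity), and that it is precisely the polar blow-up $\Pi$ that reconciles them, upgrading Proposition \ref{prop:DiscontiRef} from a statement about harmonic functions to a real analytic extension of the surface itself. The supporting technical point is the Carathéodory-plus-Lemma-\ref{lem:ref_in_analytic_arc} step securing the holomorphic extension of the Riemann map across $0 \in \partial \mathbb{H}$; once this is in place, both the identification $C(X, w_0) = L$ and the two symmetries fall out as direct substitutions.
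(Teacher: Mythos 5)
Your proposal is correct and follows essentially the same route as the paper's proof: extend $h$ across $0\in\partial\mathbb{H}$ via Carath\'eodory and Lemma \ref{lem:ref_in_analytic_arc}, extend $t\circ\Pi$ via Proposition \ref{prop:DiscontiRef} (the paper justifies its applicability by writing $t$ explicitly as the Poisson integral of its boundary values, which you only gesture at), and combine the two symmetry relations through the identity $\Pi(-r,\pi-\theta)=\overline{\Pi(r,\theta)}$. The cluster-set computation for (ii) is likewise identical.
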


\noindent Here, $C(X,w_0)$ consists of the points $z$ so that $z=\lim_{w_n\to w_0} X(w_n)$ 
for some $w_n \in \mathbb{H}$. 

\begin{proof}
Let us consider a conformal parametrization $X=(h,t)\colon \mathbb{H}\to S$, where $h$ is a holomorphic function defined on $\mathbb{H}$ satisfying $h(\mathbb{H})=\Omega$. Without loss of generality, we may assume that $h(0)=z_0$.

Since $t$ is a bounded harmonic function, it can be written as the Poisson integral
\[
	t(\xi +i\eta)=P_{\hat{t}}(\xi +i\eta)=\frac{1}{\pi}\int_{-\infty}^{\infty}\frac{\eta}{(\xi-s)^2+\eta^2}\hat{t}(s)ds.
\]
of some measurable bounded function $\hat{t}$ such that $\hat{t}(x)=\lim_{y\to 0} t(x+iy)$ almost every $x\in \mathbb{R}$ (see \cite[Chapter 3]{Katz}, and see \cite[Chapter 4, Section 6.4]{Ahl1} for the Poisson integral on $\mathbb{H}$). By the assumption, $\hat{t}$ has a discontinuous jump point at $w=0$ and we may assume that $\hat{t}\equiv a$ on $(-\varepsilon, 0)$ and  $\hat{t} \equiv b$ on $ (0,\varepsilon)$ for some $\varepsilon>0$. By Proposition \ref{prop:DiscontiRef}, $t\circ \Pi \colon D^+\to \mathbb{R}$ extends to $D$ real analytically so that 
\begin{equation}\label{eq:t}
t\circ \Pi(-r,\pi-\theta)+t\circ\Pi(r,\theta)=a+b,\quad (r, \theta) \in D^+.
\end{equation}

Next, we consider the function $h$. By the Carath\'eodory Theorem (see \cite[Theorem 17.16]{Mil1}), $h$ can be extended to $h\colon \overline{\mathbb{H}} \to \overline{\Omega}$ homeomorphically. The assumption implies that 
$h({-\varepsilon, \varepsilon})$ is a regular analytic curve for some $\varepsilon >0$ and hence $h$ can be extended across $(-\varepsilon,\varepsilon)$ so that $h(\overline{w})=R_{\Gamma} \circ h(w)$ by Lemma \ref{lem:ref_in_analytic_arc}.
 Therefore $h\circ \Pi \colon D^+\to \Omega$ extends across $\{(0,\theta)\mid 0<\theta<\pi\}$ real analytically and it satisfies 
\begin{equation}\label{eq:h}
h\circ \Pi (-r,\pi-\theta) =h(\overline{re^{i\theta}}) =\left( R_{\Gamma} \circ h\circ \Pi \right) (r,\theta).
\end{equation}

By the equations \eqref{eq:t} and \eqref{eq:h}, $X\circ \Pi$ can be extended across $\{(0,\theta)\mid 0<\theta<\pi\}$ and satisfies
\[
X\circ \Pi (-r,\pi-\theta)=\left((R_{\Gamma} \circ h\circ \Pi) (r,\theta), a+b-t\circ\Pi(r,\theta)\right),
\]
which implies the desired reflection across $L$. 

Finally, by (ii) of Proposition \ref{prop:DiscontiRef}, we obtain the relation
\[
X\circ \Pi (0, \theta)=\left(h(0), t\circ \Pi (0, \theta)\right) = \left(z_0, a\cfrac{\theta}{\pi}+b\left(1-\cfrac{\theta}{\pi}\right) \right).
\]
This means that the cluster point set $C(X,0)$ of $X$ at $w_0=0$ becomes $L$. 
\end{proof}

\begin{remark}\label{rem:cursterline}
As in (ii) of Theorem \ref{thm:reflection}, special kind of boundary lines on zero mean curvature surfaces in several ambient spaces appear as cluster point sets of conformal mappings. For example,  points of minimal graphs in $\mathbb{E}^3$ of the form $t=\varphi(x,y)$ on which the function $\varphi$ diverges to $\pm \infty$, and lightlike line segments on boundary of maximal surfaces in $\mathbb{L}^3$ can be also written as cluster point sets of conformal mappings, see \cite{AF1} for more details. In particular, a reflection principle for lightlike line segments was proved in \cite{AF2}. 

\end{remark}

As a special case of Theorem \ref{thm:reflection}, we can consider the following more specific boundary conditions.

\begin{corollary}\label{cor:hline}
Under the same assumptions as in Theorem \ref{thm:reflection}, suppose the isotropic line segment $L$ connects two parallel horizontal straight line segments $l_i$ $(i=1, 2)$ on $\partial{S}$. Then the surface $S$ can be extended real analytically across $L$ via the symmetry with respect to the parallel line to $l_i$ $(i=1, 2)$ passing through the midpoint of $L$.
\end{corollary}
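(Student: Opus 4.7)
The plan is to specialize Theorem \ref{thm:reflection}(i) to the present geometric configuration. The key preliminary observation is that the hypothesis forces the regular analytic arc $\Gamma$ of Theorem \ref{thm:reflection} to be a straight line segment in the $xy$-plane: since $l_1$ and $l_2$ are parallel horizontal line segments, their projections to the $xy$-plane lie on a common straight line, and the requirement that these two projections together form a single regular analytic arc through $z_0$ forces $\Gamma$ to be a subarc of that line. Moreover, for a straight line $\Gamma \subset \mathbb{C}$, the reflection $R_\Gamma$ defined in Section \ref{sec:3} coincides with the ordinary Euclidean reflection of the plane across $\Gamma$: one sees this by taking an affine parametrization $\gamma(s) = p + s\vect{v}$ and computing $R_\gamma = \gamma \circ R \circ \gamma^{-1}$ directly.

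With this in hand, I would invoke Theorem \ref{thm:reflection}(i) to write the extension map explicitly as
\begin{equation*}
\sigma \colon (x, y, t) \longmapsto \bigl(R_\Gamma(x, y), \, 2t_0 - t\bigr),
\end{equation*}
where $t_0$ denotes the height of the midpoint of $L$. By the observation above, $\sigma$ is the composition of the Euclidean reflection in the vertical plane $V_\Gamma$ lying over $\Gamma$ with the Euclidean reflection in the horizontal plane $\{t = t_0\}$. These two planes meet perpendicularly along a single horizontal line $\ell$, and the classical fact that the composition of reflections in two perpendicular planes equals the half-turn about their intersection shows that $\sigma$ is the $180^\circ$ rotation about $\ell$.

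It then remains to identify $\ell$ with the line described in the corollary. By construction, $\ell$ lies in the horizontal plane $\{t = t_0\}$ and projects onto $\Gamma$, hence it is horizontal and parallel to both $l_1$ and $l_2$, which also project onto $\Gamma$. Because the midpoint of $L$ has $t$-coordinate $t_0$ and projects to $z_0 \in \Gamma$, it lies on $\ell$. Thus $\ell$ is exactly the horizontal line parallel to $l_i$ $(i=1,2)$ through the midpoint of $L$, and $\sigma$ realizes the claimed symmetry. No substantive obstacle arises beyond the geometric identification above; the corollary is essentially a reading of Theorem \ref{thm:reflection}(i) in the special case where the common projected arc degenerates into a straight segment.
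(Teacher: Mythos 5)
Your proposal is correct and follows essentially the same route as the paper: both specialize Theorem \ref{thm:reflection}(i) by observing that the projected arc $\Gamma$ is a straight segment, so that $R_{\Gamma}$ is the Euclidean reflection, and then identify the composition with the height reflection as the half-turn about the horizontal line through the midpoint of $L$ parallel to $l_i$. The paper merely normalizes $\Gamma$ onto the $x$-axis so that $R_{\Gamma}$ becomes complex conjugation, whereas you argue coordinate-free via the composition of two perpendicular plane reflections; the difference is cosmetic.
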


\begin{proof} 
By the assumption, $h(-\varepsilon, \varepsilon )$ in the proof of Theorem \ref{thm:reflection} is a line segment and we may assume that this line segment is on the $x$-axis. Then
$h\circ \Pi (-r,\pi-\theta) =\overline{h(re^{i\theta})}$ holds by \eqref{eq:h} and hence $X=(h,t)$ satisfies 
\begin{align*}
 X\circ \Pi(-r, \pi-\theta)  &=\left( \overline{h(re^{i\theta})}, a+b - t\circ\Pi(r,\theta) \right),
\end{align*}
which implies that the extension $X\circ \Pi$ is invariant under the symmetry with respect to the parallel line to $x$-axis passing through the midpoint of $L$.
\end{proof}

For a zero mean curvature surface $S$ parametrized as \eqref{eq:w-formula} with Weierstrass data $(F,G)$, the surface $X^*$ with Weierstrass data $(iF,G)$ is called the {\it conjugate surface} of $S$, see \cite[p.424]{Strubecker}, \cite[p.~238]{Sachs} and \cite{Sato}. 
Let $X=(h,t)\colon \mathbb{H} \to \mathbb{I}^3 \simeq \mathbb{C}\times \mathbb{R}$ be a conformal parametrization of $S$. Then $X^*:=-(h^*,t^*)$ is a conformal parametrization of $S^*$, which are formed by conjugate harmonic functions.
In the end of this section, we mention that isotropic lines of $S$ correspond to points of $S^*$ on which $t^*$ diverges to $\pm \infty$ as follows.
\begin{corollary}\label{cor:conjugate}
Under the same assumption as in Theorem \ref{thm:reflection}, 
\begin{equation}\label{eq:conjugate_behavior}
\displaystyle \lim_{w\to w_0}\lvert t^*(w)\rvert=\infty.
\end{equation}
\end{corollary}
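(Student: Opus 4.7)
The plan is to isolate the logarithmic singularity that the boundary jump of $t$ at $w_0$ forces on its harmonic conjugate $t^*$. After normalizing $w_0 = 0$ as in the proof of Theorem \ref{thm:reflection}, the boundary trace $\hat{t}$ is identically $a$ on $(-\varepsilon, 0)$ and identically $b$ on $(0, \varepsilon)$. Since $L$ is a nondegenerate isotropic segment we must have $a \neq b$, and this is precisely what drives the divergence.

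First I would decompose $t = t_0 + t_1$ on $\mathbb{H}$, where
\[
t_0(w) = b + \frac{a-b}{\pi} \arg(w), \qquad \arg(w) \in (0,\pi),
\]
is the bounded harmonic model jump with boundary values $b$ on $(0, \infty)$ and $a$ on $(-\infty, 0)$, and $t_1 := t - t_0$ is a bounded harmonic function on $\mathbb{H}$ whose boundary trace vanishes on $(-\varepsilon, 0) \cup (0, \varepsilon)$. Since $\log w = \log|w| + i \arg(w)$ is holomorphic on $\mathbb{H}$, the harmonic conjugate of $\arg(w)$ is $-\log|w|$, and so, up to a constant,
\[
t_0^*(w) = -\frac{a-b}{\pi} \log|w|,
\]
which diverges to $\pm \infty$ as $w \to 0$.

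Next I would show that $t_1^*$ stays bounded near $0$. Because $t_1$ is the Poisson integral of a bounded measurable function that vanishes identically on $(-\varepsilon, \varepsilon)$, it extends continuously to $0$ with value $0$; the Schwarz reflection principle then extends $t_1$ harmonically to a full open neighborhood of $0$ in $\mathbb{C}$. On that simply connected neighborhood $t_1$ admits a harmonic conjugate, which must agree with $t_1^*$ on $\mathbb{H}$ up to an additive constant. Hence $t_1^*$ is bounded near $0$, and combining with the preceding step gives $|t^*(w)| = |t_0^*(w) + t_1^*(w)| \to \infty$ as $w \to w_0$, as required. Finally, since $X^* = -(h^*, t^*)$, the corresponding height component of the conjugate surface diverges in absolute value, yielding \eqref{eq:conjugate_behavior}.

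The main obstacle I anticipate is the handling of the remainder $t_1^*$: a naive harmonic conjugate of the discontinuous function $t$ need not be tractable near $w_0$. The correct move is to subtract off the explicit jump $t_0$, which is possible exactly because the hypothesis forces $\hat t$ to be locally constant on each of the two horizontal boundary pieces. This reduces the analysis of $t_1^*$ to the routine Schwarz reflection across an interval on which a harmonic function vanishes, after which the coercive $\log|w|$ contribution from $t_0^*$ — nonzero precisely by virtue of $a \neq b$ — delivers the divergence.
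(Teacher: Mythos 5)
Your proof is correct and follows essentially the same route as the paper: both isolate the jump of $\hat t$ at $w_0$ into an explicit harmonic term whose conjugate is $-\tfrac{a-b}{\pi}\log|w|$, and show that the conjugate of the remaining bounded harmonic piece (whose boundary trace vanishes on $(-\varepsilon,\varepsilon)$) stays bounded near $0$. Your version merely packages the paper's formula \eqref{eq:t2} into a two-term decomposition and makes explicit the (correct) observation that $a\neq b$ is what forces the divergence.
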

\noindent Here, we remark that since $h^*=ih$ the $(x,y)$ coordinates of $S^*$ are essentially only the $90^\circ$-rotation of those of $S$.

\begin{proof}
We use the formulation of the proof of Theorem \ref{thm:reflection}. We can easily see that $t$ is written as
\begin{equation}\label{eq:t2}
t(w)= a+b+\frac{a-b}{\pi}\arg{w}-\frac{a}{\pi}\arg{(-\varepsilon -w)} +\frac{b}{\pi}\arg{(\varepsilon+w)}+P_W,
\end{equation}
where $P_W$ is the Poisson integral of $W:=(1-\chi_{(-\varepsilon, \varepsilon)})\hat{t}$ and $\chi_{(-\varepsilon, \varepsilon)}$ is the characteristic function on $(-\varepsilon, \varepsilon)$. 
By \eqref{eq:t2} and the fact that the conjugate function $\arg^*{(w)}$ is $-\log{|w|}$, we obtain
\[
t^*(w) = -\frac{a-b}{\pi}\log{\lvert w \rvert} + \frac{a}{\pi} \log{\lvert-\varepsilon -w\rvert} - \frac{b}{\pi}\log{\lvert \varepsilon+w\rvert} +P^*_W +c,
\]
for some constant $c$. Since $P_W\lvert_{(-\varepsilon, \varepsilon)} =0$, it follows that $\lim_{r\to 0}P^*_W(re^{i\theta})$ is a constant and hence $\displaystyle \lim_{r\to 0}\lvert t^*(re^{i\theta})\rvert=\infty$. Therefore, we obtain the desired result.
\end{proof}

\section{Examples}
By Theorem \ref{thm:reflection}, we can construct zero mean curvature surfaces in $\mathbb{I}^3$ with isotropic line segments.

\begin{example}[Isotropic helicoid and catenoid]
If we take the Weierstrass data $(F,G)=\left(1,\frac{1}{2\pi i w}\right)$ defined on $\mathbb{H}$, then by using \eqref{eq:w-formula} we have the helicoid 
\[
X(re^{i\theta}) = \left( r\cos{\theta},r\sin{\theta}, \frac{\theta}{\pi} \right),\quad w=re^{i\theta} \in \mathbb{H}.
\]

Using the notations in Section \ref{subsec:vline} and by Corollary \ref{cor:hline}, $X$ can be extended to $X\circ \Pi (r, \theta) =X(re^{i\theta})$ defined on $\mathbb{R}\times (0, \pi)$ across the isotropic line segment $L=\{ X\circ \Pi(0, \theta)\in \mathbb{I}^3\mid \theta \in [0,\pi]\}$.
Moreover, by Corollary \ref{cor:reflection_nonvertical}, we can extend $X\circ \Pi$ across the horizontal lines on the boundary. Repeating this reflection, we have the entire part of the singly periodic helicoid in $\mathbb{I}^3$. See Figure \ref{Fig:Helicoids}.

\begin{figure}[htbp]
 \vspace{3.5cm}
\hspace*{-8ex}
    \begin{tabular}{cc}
     \hspace*{-3ex}

      \begin{minipage}[t]{0.55\hsize}
        \centering
                \vspace{-3.2cm}
                \hspace*{-5ex}
        \includegraphics[keepaspectratio, scale=0.36]{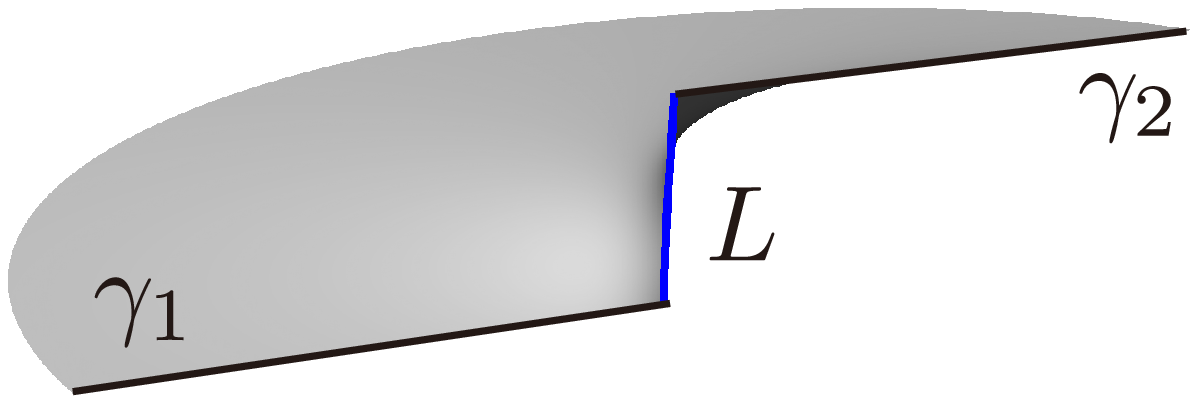}
              \end{minipage} 
                 \hspace*{-19ex}
        \begin{minipage}[t]{0.55\hsize}
        \centering
            \vspace{-3.5cm}
        \includegraphics[keepaspectratio, scale=0.36]{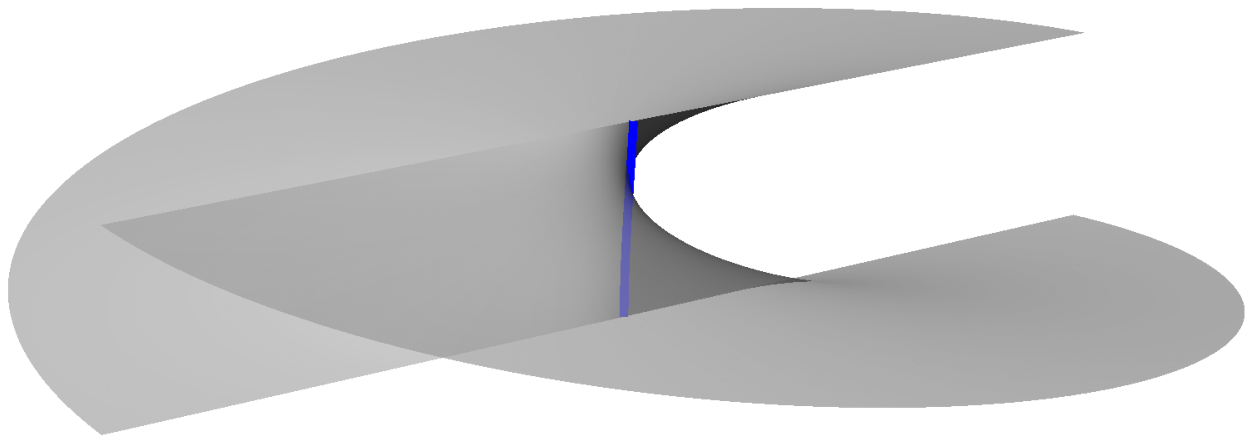}
              \end{minipage} 
           \hspace*{-9ex}
      \begin{minipage}[t]{0.50\hsize}
        \centering
        \vspace{-3.5cm}
        \includegraphics[keepaspectratio, scale=0.36]{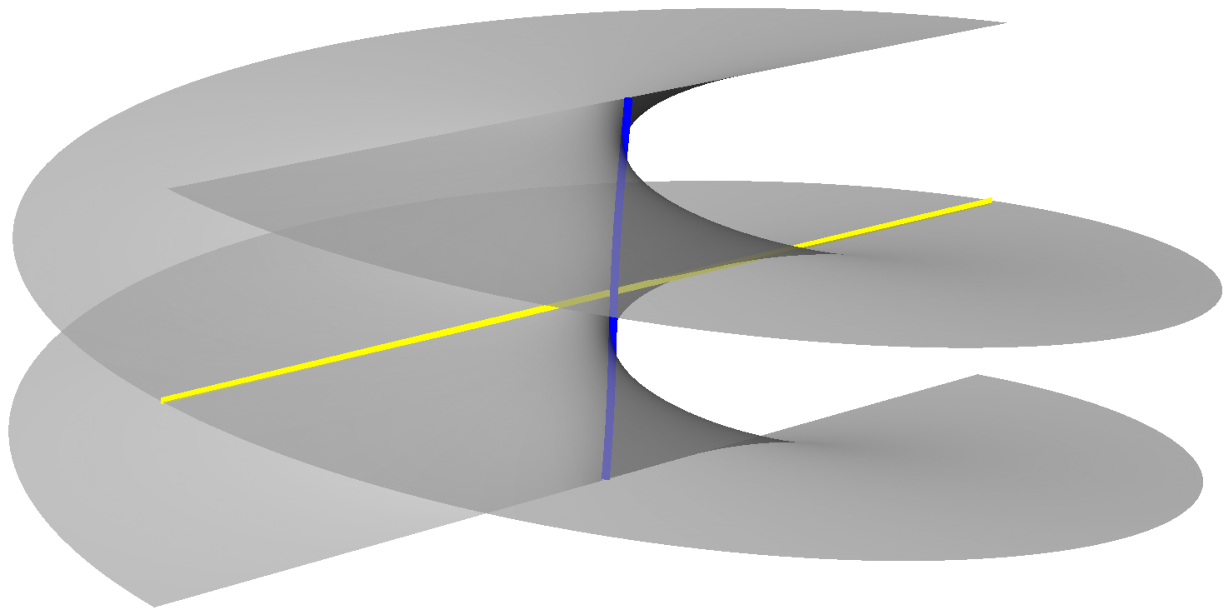}
            \end{minipage} 
    \end{tabular}
     \vspace{-1.5cm}
       \caption{The left one is a part of the helicoid in $\mathbb{I}^3$ with an isotropic line segment $L$, the center one is its reflection across $L$, and the right one is the surface after the reflection of a horizontal line. For the notations of $\gamma_1$ and $\gamma_2$, see Theorem \ref{thm:reflection_Intro}.}
       \label{Fig:Helicoids}
  \end{figure}
\end{example} 

The conjugate surface of $X$ is written as
\[
X^*(re^{i\theta}) = \left( r\sin{\theta}, -r\cos{\theta},\frac{1}{\pi}\log{r} \right),\quad w=re^{i\theta} \in \mathbb{H},
\]
which is the half-piece of a rotational zero mean curvature surface called the {\it isotropic catenoid}. 
By Corollary \ref{cor:conjugate}, $L$ on $X$ corresponds to the limit $\lim_{r\to 0}X^*(re^{i\theta})=(0,0, -\infty)$. Moreover, since horizontal straight lines of $X$ also correspond to curves on $X^*(\partial{\mathbb{H}})$ in the $yt$-plane, we can extend $X^*$ via the reflection with respect to this vertical plane. See Figure \ref{Fig:iCat}. 
\begin{figure}[htbp]
 \vspace{3.6cm}
\hspace{-8ex}
    \begin{tabular}{cc}
      \begin{minipage}[t]{0.55\hsize}
        \centering
                \vspace{-3.5cm}
        \includegraphics[keepaspectratio, scale=0.5]{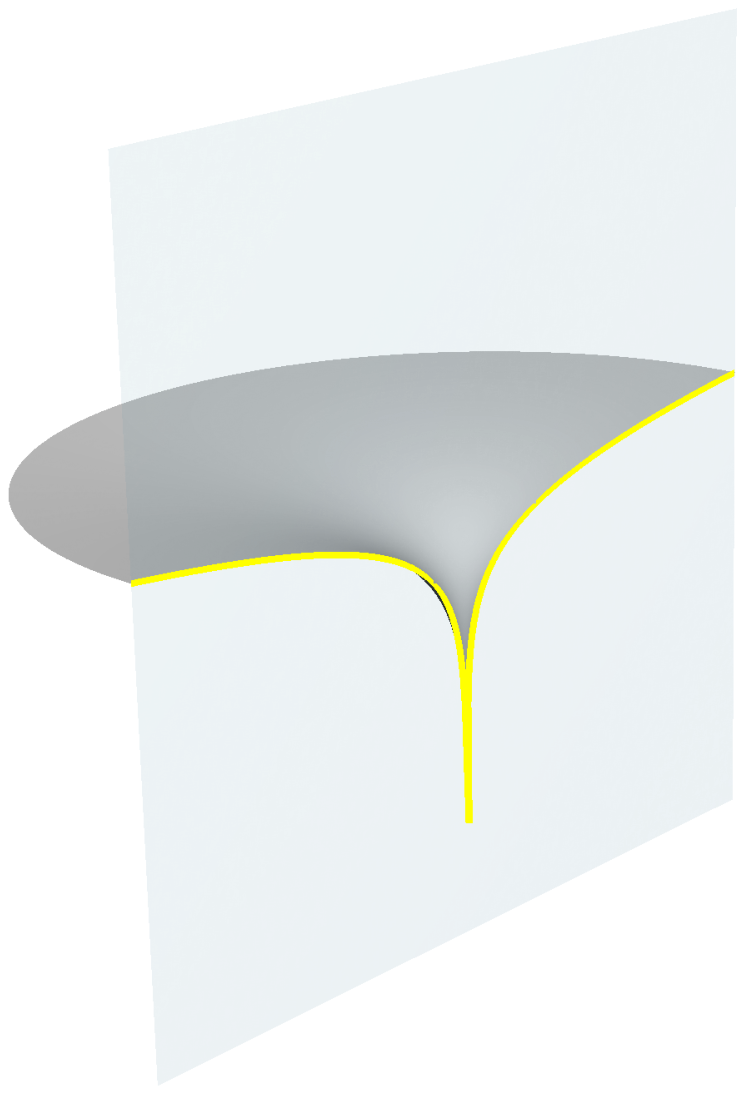}
              \end{minipage} 
                 \hspace{-7ex}
        \begin{minipage}[t]{0.55\hsize}
        \centering
            \vspace{-3.7cm}
        \includegraphics[keepaspectratio, scale=0.6]{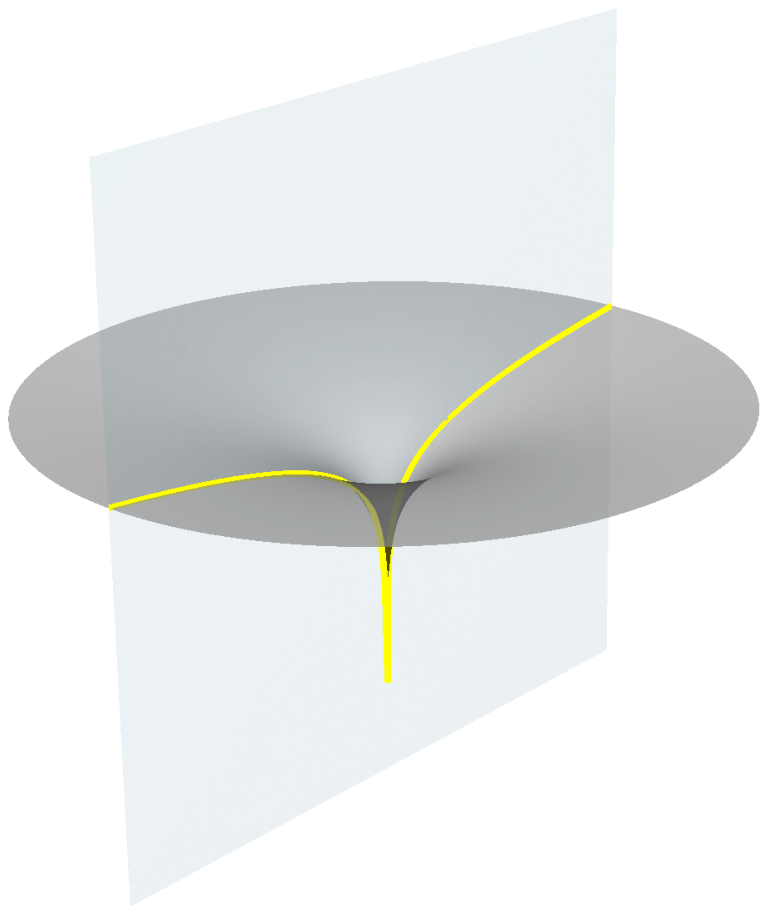}
              \end{minipage} 
    \end{tabular}
      \caption{The isotropic catenoid (left) and its analytic extension (right).}
       \label{Fig:iCat}
  \end{figure}

\begin{example}[Isotropic Schwarz D-type surface] \label{ex:Schwarz}
For an integer $n \geq 2$, it is known that the Schwarz-Christoffel mapping $f\colon \mathbb{D} \to \mathbb{C}$ defined by
\begin{equation*}
	f(w)=\int^w_0 \frac{1}{(1-w^{2n})^{\frac{1}{n} } } dw
\end{equation*}
maps the unit disk $\mathbb{D}$ conformally to a regular $2n$-gon $\Omega$ (see \cite[Chapter 6, Section 2.2]{Ahl1}). The mapping $f$ extends homeomorphically to $f\colon \overline{\mathbb{D}} \to \overline{\Omega}$ by the Carath\'eodory theorem, and $w=e^{k \pi i/n}\ (k=1,2,\ldots,2n)$ correspond to the vertices of $\Omega$. On the other hand, by using the equations
\begin{equation*}
	f(e^{\pi i/n}w)=e^{\pi i/n}f(w), \quad f(\overline{w})=\overline{f(w)},
\end{equation*}
we can see that the boundary points
\begin{equation*}
	w_k := e^{ \frac{k \pi i}{n} - \frac{\pi i}{2n} }\ \ (k=1,2,\ldots,2n)
\end{equation*}
correspond to the midpoints of edges of $\Omega$.

Let $I_k$ be the shortest arc of $\partial \mathbb{D}$ joining $w_k$ and $w_{k+1}$ ($k=1,2,\ldots,2n$) where $w_{2n+1}:=w_1$, and let
\begin{eqnarray*}
	\hat{t}(w):=\left\{ 
			\begin{array}{ll}
				1 & \ \ (w \in I_{2k-1},\ k=1,2,\ldots ,n )\\[1ex]
				0 & \ \ (w\in I_{2k},\ k=1,2,\ldots , n )
			\end{array} 
		\right. .
\end{eqnarray*}
The Poisson integral of $\hat{t}$ can be easily computed, and we have
\begin{eqnarray*}
	t(w)&=&\frac{1}{2\pi}\int_0^{2\pi}\frac{1-\lvert w \rvert^2}{\lvert e^{is}-w \rvert^2}\ \hat{t}(e^{is})ds \\[1ex]
			&=&\sum_{k=1}^n \frac{1}{\pi} \arg \left( \frac{w_{2k}-w}{w_{2k-1}-w} \right) - \frac{1}{2}.
\end{eqnarray*}
Then $X:=(f,t)\colon \mathbb{D} \to \mathbb{I}^3$ is a zero mean curvature surface with $2n$-isotropic lines in its boundary, and by construction, each of the isotropic lines connects two parallel horizontal line segments on the boundary. Therefore, Corollary \ref{cor:hline} is applicable, and $S:=X(\mathbb{D})$ extends real analytically across each isotropic line $L$ via the $180^\circ$-degree rotation with respect to the straight line parallel to the edge of $\Omega$ passing through the midpoint of $L$ (see Figure \ref{Fig:polygon1}).

\begin{figure}[htbp]
    \vspace{38ex}
    \begin{tabular}{cc}
      \hspace{-5ex} 
      \begin{minipage}[t]{0.4\hsize}
        \centering \vspace{-40ex}
        \includegraphics[keepaspectratio, scale=0.40]{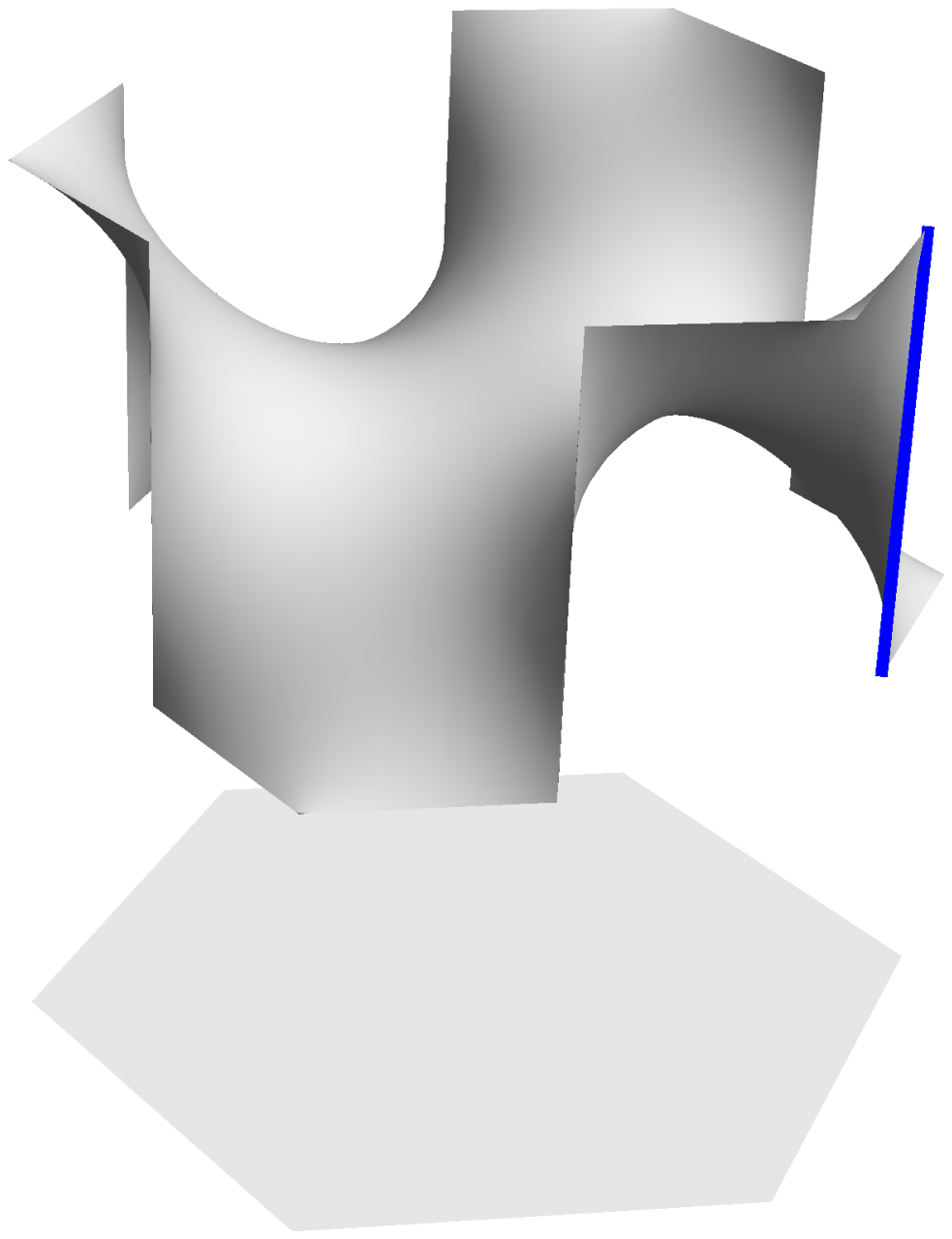}  \label{zu1}
      \end{minipage} &
      \hspace{-5ex} 
      \begin{minipage}[t]{0.6\hsize}
     \vspace{-40ex} 
        \centering
        \includegraphics[keepaspectratio, scale=0.60]{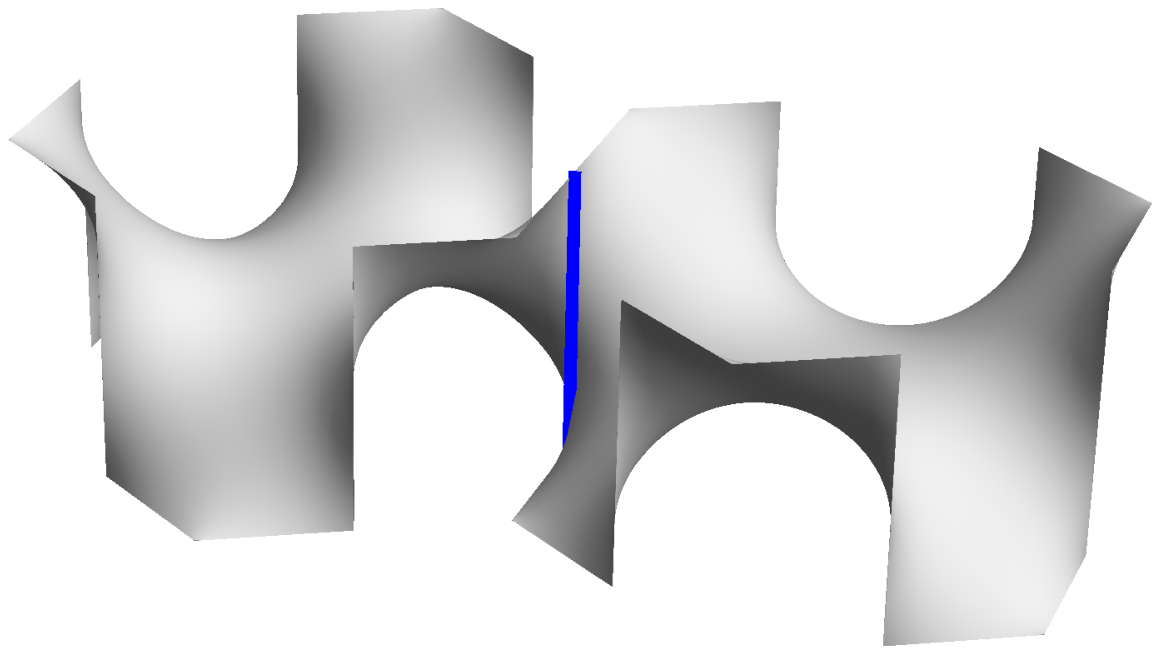}  \label{zu2}
      \end{minipage}
    \end{tabular}
    \vspace*{-1.8cm}
    \caption{Case $n=3$ in Example \ref{ex:Schwarz}: a zero mean curvature surface whose boundary consists of horizontal lines and isotropic lines (left) and its extension across an isotropic line (right).}\label{Fig:polygon1}
\end{figure}

\noindent
In particular, if $n=2$ (i.e. $\Omega$ is a square), we can obtain a triply periodic zero mean curvature surface in $\mathbb{I}^3$ which is analogous to Schwarz' D minimal surface in $\mathbb{E}^3$ (cf.~\cite{DHS}) with isotropic lines by iterating reflections of $S$ (see Figure \ref{Fig:polygon2}).

\begin{figure}[htbp]
\vspace{26ex}
    \begin{tabular}{cc}
    \hspace{-6em}
      \begin{minipage}[t]{0.4\hsize}
        \centering \vspace{-23ex} 
        \includegraphics[keepaspectratio, scale=0.36]{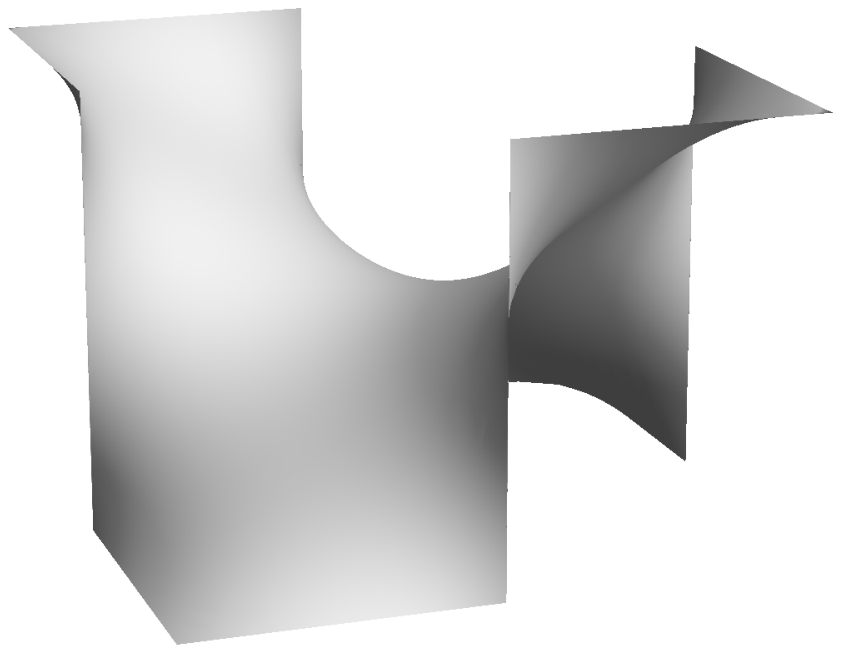}  \label{zu1}
      \end{minipage} &
      \begin{minipage}[t]{0.6\hsize}
         \vspace{-31ex} 
        \centering \hspace{-19ex}
        \includegraphics[keepaspectratio, scale=0.38]{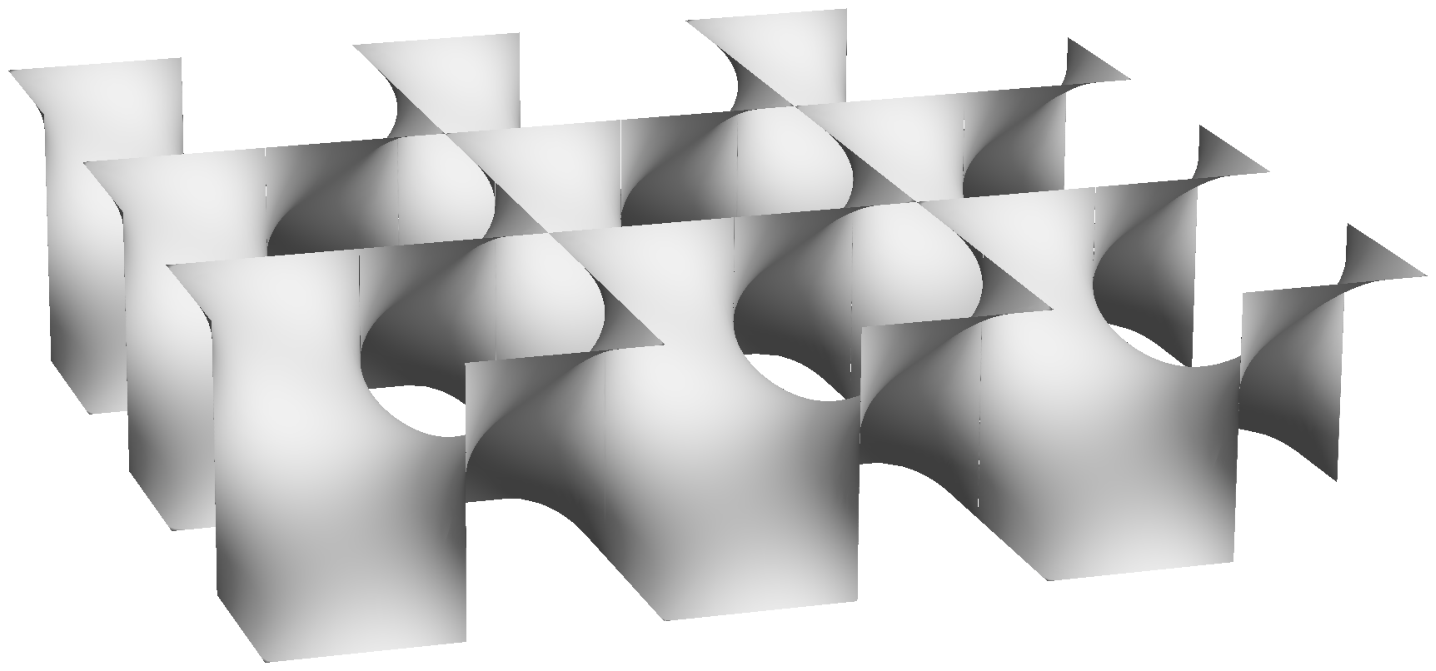}  \label{zu2}
      \end{minipage}
      \begin{minipage}[t]{0.6\hsize}
        \vspace{-30ex} 
        \centering \hspace{-39ex}
        \includegraphics[keepaspectratio, scale=0.5]{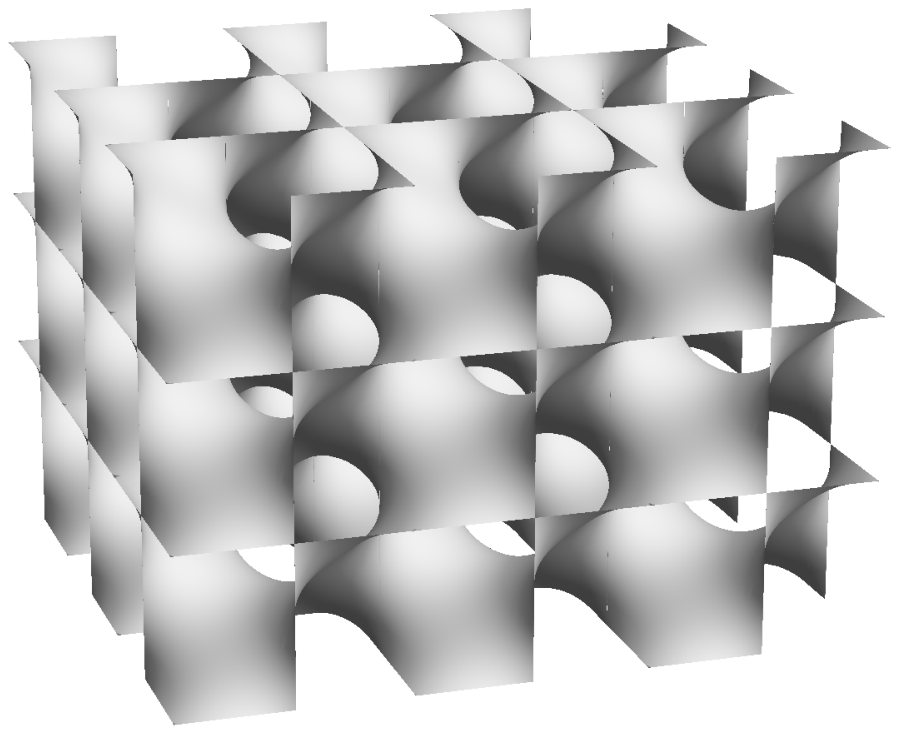}  \label{zu2}
      \end{minipage}
    \end{tabular}
    \vspace*{-1.2cm}
    \caption{Case $n=2$ in Example \ref{ex:Schwarz}: construction of a triply periodic zero mean curvature surface.}\label{Fig:polygon2}
\end{figure}
\end{example}


\begin{acknowledgement}
The authors would like to express their gratitude to the referees for their careful readings of the submitted version
of the manuscript and fruitful comments and suggestions.
\end{acknowledgement}




\end{document}